\begin{document}
\frontmatter
\title{A remark on the irregularity complex}

\author[C.~Sabbah]{Claude Sabbah}
\address[C.~Sabbah]{CMLS, École polytechnique, CNRS, Université Paris-Saclay\\
F--91128 Palaiseau cedex\\
France}
\email{Claude.Sabbah@polytechnique.edu}
\urladdr{http://www.math.polytechnique.fr/perso/sabbah}
\thanks{This research was supported by the grant ANR-13-IS01-0001-01 of the Agence nationale de la recherche.}

\subjclass{34M40, 32C38, 35A27}
\keywords{Good formal flat bundle, irregularity complex, real blowing-up, Stokes filtration, Riemann-Hilbert correspondence}

\begin{abstract}
We prove that, for a good meromorphic flat bundle with poles along a divisor with normal crossings, the restriction of the irregularity complex to each natural stratum of this divisor only depends on the formal flat bundle along this stratum. This answers a question raised by J.-B.\,Teyssier. The present version takes into account the published erratum of \cite{Bibi16b}.
\end{abstract}

\maketitle
\tableofcontents
\mainmatter

\section{Statement of the results}
Let $X$ be a complex manifold of dimension $n$ and let $D=\bigcup_{i\in J}D_i$ be a divisor with normal crossings. We assume that each irreducible component $D_i$ of $D$ is smooth. For any subset $I\subset J$, we set $D_I=\bigcap_{i\in I}D_i$ and $D_I^\circ=D_I\moins\bigcup_{j\notin I}D_j$. We denote the codimension of $D_I^\circ$ by $\ell$, that we regard as a locally constant function on $D_I^\circ$ (which can have many connected components), and by $\iota_I:D_I^\circ\hto D$ the inclusion. Let~$\cM$ be a holonomic $\cD_X$-module such that
\begin{enumerate}
\item
$\cM=\cM(*D)$,
\item
$\cM_{X\moins D}$ is locally $\cO_X$-free of finite rank.
\end{enumerate}
We then say that $\cM$ is a meromorphic flat bundle with poles along $D$. \emph{In this note, we assume that $\cM$ has a good formal structure along $D$} (we simply say that~$\cM$ is a \emph{good $D$-meromorphic flat bundle}, or a \emph{good meromorphic flat bundle on $(X,D)$}). This notion, together with the Riemann-Hilbert correspondence, will be recalled in Section~\ref{sec:bonnestructureRH}. Recall also that, given any meromorphic flat bundle on $(X',D')$ (where $D'$ is an arbitrary reduced divisor in $X'$), there exists, locally on $X'$, a projective modification $X\to X'$ such that the pullback of $D'$ by this modification is a divisor with simple normal crossings $D$ and the pullback meromorphic flat bundle is a $D$-meromorphic flat bundle having a good formal structure along $D$ (\cf\cite{Kedlaya09,Kedlaya10}, and \cite{Mochizuki07b, Mochizuki08} in the algebraic case; see also \cite{Bibi97} for special cases when $\dim X=2$).

For every $I\subset J$, we consider the sheaf $\cO_{\wh{X|D_I^\circ}}$ on $D_I^\circ$, also denoted by $\cO_{\wh{D_I^\circ}}$, defined as the formalization of $\cO_X$ along $D_I^\circ$. We also regard it as a sheaf on $X$ by extending it by zero. We then set $\cD_{\wh{D_I^\circ}}=\cO_{\wh{D_I^\circ}}\otimes_{\cO_X}\cD_X$, and $\cM_{\wh{D_I^\circ}}:=\cD_{\wh{D_I^\circ}}\otimes_{\cD_X}\cM$.

For any holonomic $\cD_X$-module $\cN$, the irregularity complexes $\Irr_D\cN$ and $\Irr^*_D\cN$, as defined by Mebkhout \cite{Mebkhout90}, are constructible complexes supported on $D$, and only depend on $\cN(*D)$. For a \emph{good} $D$-meromorphic flat bundle $\cM$ as above, the cohomology of $\Irr_D\cM$ and $\Irr^*_D\cM$ is \hbox{locally} constant along each stratum $D_I^\circ$: this follows from \cite[Th.\,12.2.7]{Teyssier13} if $\#I=1$ and from Corollary \ref{cor:irrrestr} together with the case $\#I=1$ otherwise. On the other hand, Mebkhout has shown that the complexes $\Irr_D\cM[\dim X]$, $\Irr^*_D\cM[\dim X]$ are a perverse sheaves (\cf \loccit).

Our aim in this note is to compare the irregularity complexes of $\cM$ restricted to~$D_I^\circ$ and those of the formalized module $\cM_{\wh{D_I^\circ}}$. However, the irregularity complexes of $\cM_{\wh{D_I^\circ}}$ are not defined by the procedure of \cite{Mebkhout90}. To give a meaning to the question, we start by proving in Section \ref{subsec:proofprop1} the following proposition.

\begin{proposition}\label{prop:MMhat}
For every $I\subset J$, there exists a unique good $D$-meromorphic flat bundle~$\cM_I^\circ$ in the neighbourhood of $D_I^\circ$ which satisfies the following two properties.
\begin{enumerate}
\item\label{prop:MMhat1}
$\cD_{\wh{D_I^\circ}}\otimes_{\cD_X}\cM_I^\circ\simeq\cM_{\wh{D_I^\circ}}$.
\item\label{prop:MMhat2}
At each point of $D_I^\circ$, the local formal decomposition of $\cM_I^\circ$ (after a local ramification around $D$) into elementary formal $D$-meromorphic flat bundles already holds without taking formalization.
\end{enumerate}
\end{proposition}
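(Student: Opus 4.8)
The plan is to take for $\cM_I^\circ$ the \emph{model} of $\cM$ along $D_I^\circ$: the meromorphic flat bundle obtained by de-formalizing the good formal decomposition of $\cM$, equivalently the associated graded meromorphic flat bundle of the Stokes structure of $\cM$ in a neighbourhood of $D_I^\circ$; and then to check that this bundle is genuinely convergent, satisfies the two properties, and is the only good $D$-meromorphic flat bundle near $D_I^\circ$ that does.

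For existence I argue locally near a point $p\in D_I^\circ$. Choosing coordinates that turn the components of $D$ through $p$ into coordinate hyperplanes and a local ramification $\rho$ around $D$ adapted to $\cM$ at $p$, the good formal structure of $\cM$ yields $\rho^+\cM_{\wh{D_I^\circ},p}\simeq\bigoplus_\alpha\mathcal E^{\varphi_\alpha}\otimes\wh R_\alpha$, the $\varphi_\alpha$ being meromorphic along the pullback of $D$ and the $\wh R_\alpha$ regular formal flat bundles; this uses that the good formal structure of $\cM$, which a priori concerns the formalization along the divisor $D$, still decomposes the formalization along the (smaller) stratum $D_I^\circ$. Each $\mathcal E^{\varphi_\alpha}$ is already convergent, and by the classical comparison for regular meromorphic connections (Deligne) each $\wh R_\alpha$ is the formalization of a unique convergent regular meromorphic flat bundle $R_\alpha$, formalization being moreover fully faithful on regular bundles; thus $N:=\bigoplus_\alpha\mathcal E^{\varphi_\alpha}\otimes R_\alpha$ is a genuine good meromorphic flat bundle near $\rho^{-1}(p)$ whose formalization is the displayed right-hand side. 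The $\rho$-descent datum carried by $\rho^+\cM_{\wh{D_I^\circ},p}$ only permutes the factors $\mathcal E^{\varphi_\alpha}$, and since there is no nonzero horizontal morphism between two elementary pieces with distinct exponential factors it is built out of isomorphisms of the regular factors; as these are detected on formalizations, the descent datum de-formalizes to one on $N$, and descending gives a good $D$-meromorphic flat bundle $\cM_{I,p}^\circ$ near $p$ satisfying property~\ref{prop:MMhat2} by construction and property~\ref{prop:MMhat1} because formalization commutes with the finite pushforward $\rho_+$ and with passing to $\rho$-invariants.

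For uniqueness, which is a local statement near $p$, suppose $\cN$ satisfies \ref{prop:MMhat1}--\ref{prop:MMhat2}. By \ref{prop:MMhat2} some ramification splits $\cN$ into elementary pieces $\mathcal E^{\psi_\beta}\otimes S_\beta$, hence $\cN$ is recovered from this family by pushforward and descent; by \ref{prop:MMhat1} the family is read off from the good formal decomposition of $\cM_{\wh{D_I^\circ},p}$ — which is unique — so its exponential factors coincide with the $\varphi_\alpha$ and its regular factors $S_\beta$ are, by full faithfulness of formalization, the convergent regular bundles $R_\alpha$. Hence $\cN\simeq\cM_{I,p}^\circ$, and the isomorphism may be chosen compatible with the isomorphisms of \ref{prop:MMhat1}, because a horizontal morphism of meromorphic flat bundles near $D_I^\circ$ that vanishes after formalization vanishes, $\cO_X$ being a subsheaf of $\cO_{\wh{D_I^\circ}}$. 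This refined local uniqueness lets the local objects $\cM_{I,p}^\circ$ glue into a good $D$-meromorphic flat bundle $\cM_I^\circ$ defined in a neighbourhood of $D_I^\circ$ and satisfying \ref{prop:MMhat1}--\ref{prop:MMhat2}.

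The two steps I expect to be most delicate are the Galois descent through the ramification of the de-formalized model $N$, and the input used at the start — that the good formal structure of $\cM$ along the divisor $D$ still decomposes the formalization along a codimension-$\ell$ stratum into elementary pieces with de-formalizable regular factors. The first is handled by the vanishing of morphisms between distinct exponential pieces together with the rigidity (full faithfulness of formalization) of regular flat bundles; the second is where the higher-dimensional good formal structure theory, in the form fixed by the published erratum to \cite{Bibi16b}, together with the stratum version of Deligne's comparison for regular meromorphic connections, enters.
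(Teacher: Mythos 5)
Your proposal is correct, but it takes a genuinely different route from the paper. The paper obtains $\cM_I^\circ$ in one stroke as the good $D$-meromorphic flat bundle corresponding, under the Riemann--Hilbert correspondence of Theorem~\ref{th:RHDIcirc}, to the \emph{graded} Stokes-filtered local system $(\gr\cL_I^\circ,\gr\cL_{I,\bbullet}^\circ)$; properties \ref{prop:MMhat1} and \ref{prop:MMhat2}, as well as uniqueness, then follow from the commutative diagram \eqref{eq:RHglobal}, whose bottom row identifies formalized gluing data with graded Stokes-filtered gluing data. You instead construct the object by hand on the $\cD$-module side: de-formalization of the elementary pieces (convergence of the $\varphi_\alpha$, plus a stratumwise Deligne comparison for the regular factors), Galois descent through the ramification using the vanishing of horizontal morphisms between pieces with distinct good exponential factors, and gluing via the faithfulness of formalization along $D_I^\circ$. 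In effect you reprove, for split objects, the essential surjectivity and full faithfulness that the paper imports wholesale from the Riemann--Hilbert machinery of Mochizuki and \cite{Bibi10}. Your route is more self-contained and makes the local structure of $\cM_I^\circ$ explicit as the elementary model; the paper's route is shorter and, more importantly, directly identifies the Stokes structure of $\cM_I^\circ$ as the graded one, which is exactly the form in which $\cM_I^\circ$ enters the proof of Theorem~\ref{th:main}. The two points you single out as delicate are the right ones: the extension of the pointwise formal decomposition to the formalization along $D_I^\circ$ is recalled in Section~\ref{sec:bonnestructureRH} with references to Kedlaya and Mochizuki, and your handling of the descent (block-diagonality together with the rigidity of the regular factors) is sound.
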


The main result of this note can now be stated.

\begin{theoreme}\label{th:main}
For every $I\subset J$, we have
\[
\iota_I^{-1}\Irr_D\cM\simeq\iota_I^{-1}\Irr_D(\cM_I^\circ),\quad\text{and}\quad\iota_I^{-1}\Irr^*_D\cM\simeq\iota_I^{-1}\Irr^*_D(\cM_I^\circ).
\]
\end{theoreme}

In other words, the complexes $\iota_I^{-1}\Irr_D\cM,\iota_I^{-1}\Irr^*_D\cM$ only depend (up to isomorphism) on the formalization $\cM_{\wh{D_I^\circ}}$ of $\cM$ along $D_I^\circ$.

\subsubsection*{Acknowledgements}
The statement of Theorem \ref{th:main} has been suggested, in a numerical variant, by Jean-Baptiste Teyssier, against my first expectation. He was motivated by a nice application to moduli of Stokes torsors  obtained in \cite{Teyssier16}. I thank him for having led me to a better understanding of the irregularity complex, and for suggesting a simpler proof of Proposition \ref{prop:MMhat}. I thank the referee for interesting comments.

\section{Good formal structure and the Riemann-Hilbert correspondence}\label{sec:bonnestructureRH}
\subsection{Notation}\label{subsec:nota}
We keep the notation of the introduction. If $Z$ is any locally closed analytic subspace of the complex analytic manifold $X$, we denote by $\cO_{\wh Z}$, the formal completion of $\cO_X$ with respect to the ideal sheaf $\cI_Z$. We regard $\cO_{\wh Z}$ as a sheaf on~$Z$.

Given $x_o\in D$, there exists a unique $I\subset J$ such that $x_o\in D_I^\circ$, and we will be mostly interested in the case where $Z$ is the point $x_o\in D$ and the case where $Z$ is equal to $D_I^\circ$. We will denote by $\cO_{\wh Z}(*D)$ the sheaf $\cO_{X|Z}(*D)\otimes_{\cO_{X|Z}}\cO_{\wh Z}$, where as usual $\cO_{X|Z}$ (\resp $\cO_{X|Z}(*D)$) denotes the sheaf-theoretic restriction to $Z$ of the sheaf~$\cO_X$ of holomorphic functions on $X$ (\resp the sheaf $\cO_X(*D)$ of meromorphic functions on~$X$ with poles at most on $D$).

If $\varphi$ (\resp $\wh\varphi$) is a section of $\cO_X(*D)$ (\resp of $\cO_{\wh Z}(*D)$), we denote by $\cE^\varphi$ (\resp $\cE^{\wh\varphi}$) the module with connection $(\cO_X(*D),\rd+\rd\varphi)$ (\resp $(\cO_{\wh Z}(*D),\rd+\wh\varphi)$). It only depends on the class, also denoted by $\varphi$ (\resp $\wh\varphi$), of $\varphi$ (\resp $\wh\varphi$) modulo $\cO_X$ (\resp $\cO_{\wh Z}$).

\subsection{Good formal structure}
We say that the $D$-meromorphic flat bundle $\cM$ has a \emph{good formal structure} if, for any $x_o\in D$, there exists a local ramification $\rho_{\bmd_I}$ of multi-degree $\bmd_I$ around the branches $(D_i)_{i\in I}$ passing through $x_o$ (hence inducing an isomorphism above $D_I^\circ$ in the neighbourhood of $x_o$) such that the pullback of the formal flat bundle $\cM_{\wh{x_o}}:=\cO_{\wh x_o}\otimes_{\cO_{X,x_o}}\cM_{x_o}$ by this ramification decomposes as the direct sum of formal elementary $D$-meromorphic connections $\cE^{\wh\varphi}\otimes\wh\cR_{\wh\varphi}$, as defined below.

We denote by $\nb(x_o)$ a small open neighbourhood of $x_o$ in $X$ above which the ramification is defined, and we denote by $x'_o$ the pre-image of $x_o$, so the ramification is a finite morphism $\rho_{\bmd_I}:\nb(x'_o)\to\nb(x_o)$. It induces a one-to-one map above $D_I^\circ\cap\nb(x_o)$. We also set $D'=\rho_{\bmd_I}^{-1}(D\cap\nb(x_o))$, so that $D'_I$ maps isomorphically to $D_I\cap\nb(x_o)=D_I^\circ\cap\nb(x_o)$.

In the above decomposition, $\wh\varphi$ varies in a \emph{good} finite subset $\wh\Phi_{x_o}\subset\cO_{\wh{x'_o}}(*D')/\cO_{\wh{x'_o}}$ and~$\wh\cR_{\wh\varphi}$ is a free $\cO_{\wh{D'_I}}(*D')$-module with an integrable connection having a regular singularity along $D'$. In other words, we do not distinguish between $\wh\varphi$ and $\wh\psi$ in $\cO_{\wh{x'_o}}(*D')$ if their difference has no poles along $D'$. Goodness means here that for any pair \hbox{$\wh\varphi\neq\wh\psi\in\wh\Phi_{x_o}\cup\{0\}$}, the difference $\wh\varphi-\wh\psi$ can be written as $x^{-\bmm}\wh\eta(x)$, with $\bmm\in\NN^{\#I}$ and $\wh\eta\in\cO_{\wh{x'_o}}$ satisfying $\wh\eta(0)\neq0$ (\cf \cite[\S I.2.1]{Bibi97}.\footnote{ Note that, here, the goodness condition is assumed for $\wh\Phi\cup\{0\}$ and not only for~$\wh\Phi$, because of \cite[Cor.\,12.7]{Bibi10}. This is unfortunately not made precise in \cite[Th.\,12.16]{Bibi10} and should be corrected.} By \cite[Prop.\,4.4.1\,\&\,Def.\,5.1.1]{Kedlaya10} (\cf also \cite[\S I.2.4]{Bibi97} and \cite[Prop.\,2.19]{Mochizuki10b}), the~$\wh\varphi$'s are convergent, \ie the set~$\wh\Phi_{x_o}$ is the formalization at $x_o$ of a finite subset $\Phi_{x_o}\subset\nobreak\Gamma(\nb(x'_o),\cO_{\nb(x'_o)}(*D')/\cO_{\nb(x'_o)})$, and the decomposition extends in a neighbourhood of $x'_o$, that is, it holds for the pullback by $\rho_{\bmd_I}$ of $\cM_{\wh{D_I^\circ},x_o}$ and induces the original one after taking formalization at $x'_o$.\footnote{I thank J.-B.\,Teyssier for pointing this out to me. In \cite{Mochizuki08,Mochizuki10b} (\cf also \cite[\S11.3]{Bibi10}), this is shown to hold only if one assumes the good formal structure at all points of $D_I^\circ\cap\nb(x_o)$.}

\subsection{Stratified $\ccI$-covering}
The set $\bigsqcup_{x_o\in D_I^\circ}(\Phi_{x_o}\cup\{0\})$ has a natural structure of a finite non-ramified covering of $D_I^\circ$ (in particular, it is a Hausdorff topological space), that we denote by $\Sigma_I^\circ\to D_I^\circ$. Locally, it is described as follows. Given a germ $\varphi_{x'_o}\in \Phi_{x_o}\cup\{0\}$, it extends locally as a section of $\cO_{\nb(x'_o)}(*D')/\cO_{\nb(x'_o)}$ and thus defines a germ in $\Phi_{y_o}\cup\{0\}$ for any $y_o\in D_I^\circ\cap\nb(x_o)$. This defines the local branch of $\Sigma_I^\circ$ passing through $\varphi_{x'_o}$. (This construction is nothing but that of the sheaf space, or étalé space, of a sheaf.)

By a similar procedure, the set $\Sigma(\cM):=\bigsqcup_I\Sigma_I$ can be endowed with a natural topology as a sheaf space, but the topology can be non-Hausdorff: this occurs if some difference $\varphi_{x'_o}-\psi_{x'_o}$ does not have poles along \emph{all} the components of $D'$ passing through $x'_o$.

In order to state the Riemann-Hilbert correspondence, we will lift these objects to the \emph{real oriented blowing-up $\varpi:\wt X:=\wt X(D_{i\in J})\to X$} along the components $D_i$ of~$D$ in $X$. We set $\partial\wt X:=\varpi^{-1}(D)$ and $\partial\wt X_I^\circ:=\varpi^{-1}(D_I^\circ)$. The fibre of $\varpi$ over a point in~$D_I^\circ$ is diffeomorphic to~$(S^1)^\ell$, making $\partial\wt X_I^\circ$ a $(S^1)^\ell$-bundle on $D_I^\circ$.  We consider the sheaf $\ccI$ on $\partial\wt X$ as constructed in \cite[\S9.3]{Bibi10}. 

By considering the fiber product
\[
\xymatrix{
\wt\Sigma_I^\circ\ar[r]\ar[d]&\Sigma_I^\circ\ar[d]\\
\partial\wt X_I^\circ\ar[r]^-\varpi&D_I^\circ
}
\]
we obtain a finite covering $\wt\Sigma_I^\circ$ of $\partial\wt X_I^\circ$ which is naturally contained in the étalé space~$\ccI^{\textup{ét}}$ of $\ccI$. By a similar procedure, we get a \emph{good stratified $\ccI$-covering} $\bigsqcup_I\wt\Sigma_I^\circ=:\wt\Sigma(\cM)\to\partial\wt X$ of $\partial\wt X$, in the sense of \cite[Rem.\,11.12]{Bibi10}. As before, $\wt\Sigma(\cM)$ can be non-Hausdorff.

\subsection{The Riemann-Hilbert correspondence (local theory)}\label{subsec:RHlocal}
Let us fix a \emph{good stratified $\ccI$\nobreakdash-covering} $\wt\Sigma$. Let $x_o\in D_I^\circ$. The local Riemann-Hilbert correspondence (\cite{Mochizuki08,Mochizuki10b}, \cite{Bibi10}) is an equivalence between the category of germs at $x_o$ of good $D$-meromorphic flat bundles~$\cM_{x_o}$ with stratified $\ccI$-covering $\wt\Sigma(\cM)$ contained in $\wt\Sigma$, and that of germs at $\varpi^{-1}(x_o)$ of good Stokes-filtered local systems $(\cL_I^\circ,\cL_{I,\bbullet}^\circ)$ on $\partial\wt X_I^\circ$ (\cf\eg \cite[\S9.5]{Bibi10}) with $\ccI$-covering contained in $\wt\Sigma_I^\circ$ (\cf\cite[Th.\,4.11]{Mochizuki10b} and \cite[Th.\,12.16]{Bibi10}).

More precisely, we have a commutative diagram of functors
\begin{equation}\label{eq:RHlocal}
\begin{array}{c}
\xymatrix{
\cM_{x_o}\ar@{|->}[d]\ar@{|->}[r]^-\sim&(\cL_I^\circ,\cL_{I,\bbullet}^\circ)_{\varpi_{-1}(x_o)}\ar@{|->}[d]^\gr\\
\cM_{\wh{D_I^\circ},x_o}\ar@{|->}[r]^-\sim&(\gr\cL_I^\circ,\gr\cL_{I,\bbullet}^\circ)_{\varpi_{-1}(x_o)}
}
\end{array}
\end{equation}
similar to that of \cite[p.\,58]{Malgrange91}, where $\gr$ means grading with respect to the Stokes filtration and the horizontal functors are equivalences of categories. Recall that grading a Stokes-filtered local system is well-defined only when one restricts to $\wt\Sigma_I^\circ$, which is Hausdorff (\cf\cite[Chap.\,1]{Bibi10}). In order to give a meaning to grading in general, one needs to control the extension from $D_I^\circ$ to a small neighbourhood $\nb(D_I^\circ)$. Locally, this is provided by the following equivalence.

\begin{proposition}[\cf {\cite[Lem.\,3.17]{Mochizuki10b}}]\label{prop:SigmaI}
The restriction functor to $\partial\wt X_I^\circ$ induces an equivalence between the category of germs at $\varpi^{-1}(x_o)$ of Stokes-filtered local systems $(\cL,\cL_\bbullet)$ on $\partial\wt X$ with associated stratified $\ccI$-covering contained in $\wt\Sigma$ and the category of germs at $\varpi^{-1}(x_o)$ of Stokes-filtered local systems $(\cL_I^\circ,\cL_{I,\bbullet}^\circ)$ on $\partial\wt X_I^\circ$ with associated $\ccI$-covering contained in $\wt\Sigma_I^\circ$.
\end{proposition}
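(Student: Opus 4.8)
The plan is to reduce the statement to the local structure of Stokes-filtered local systems with a prescribed stratified $\ccI$-covering, and to exploit the fact that $\partial\wt X_I^\circ$ is, locally near $\varpi^{-1}(x_o)$, a deformation retract of $\varpi^{-1}(\nb(x_o))$ in a way compatible with the stratification of the $\ccI$-sheaf. First I would observe that both categories in question have the same flavour: an object is a local system $\cL$ on the relevant space together with a filtration indexed by the local sections of $\ccI$, satisfying the local gradedness (``Stokes'') condition along $\wt\Sigma$ (\resp $\wt\Sigma_I^\circ$). The restriction functor is evidently well-defined and additive, so the content is faithfulness, fullness, and essential surjectivity.

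The key geometric input is that the fibre of $\varpi$ over a point of $D_I^\circ$ is $(S^1)^\ell$, and that near $\varpi^{-1}(x_o)$ one can choose local coordinates so that $\nb(x_o)$ is a polydisc, $D_i=\{x_i=0\}$ for $i\in I$, and $\partial\wt X$ restricted to this chart is $(S^1)^{\#I}\times(\text{polydisc})$ with $\partial\wt X_I^\circ$ the sub-bundle over $D_I^\circ\cap\nb(x_o)$. The projection $\nb(x_o)\to D_I^\circ\cap\nb(x_o)$ (forgetting the transverse disc factors indexed by $j\notin I$, and sending $x_i$ to $0$ for $i\in I$) lifts, after real blowing-up, to a retraction of $\varpi^{-1}(\nb(x_o))$ onto $\partial\wt X_I^\circ$. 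Since the stratified $\ccI$-covering $\wt\Sigma$ is good, its restriction to each transverse slice is constant along this retraction: the functions $\varphi$ defining the sheets of $\Sigma$ extend from $\Phi_{x_o}$ to the whole neighbourhood (this is exactly the extension property recalled in the paragraph on good formal structure), and the relative order relation they define in $\ccI$ is constant along the retraction direction, because goodness forces each difference $\varphi-\psi$ to have a pole along every component $D_i$, $i\in I$, so that its dominant behaviour is governed only by the $x_i$, $i\in I$. Therefore pulling back along the retraction sends Stokes-filtered local systems on $\partial\wt X_I^\circ$ (with covering in $\wt\Sigma_I^\circ$) to Stokes-filtered local systems on $\partial\wt X$ (with covering in $\wt\Sigma$), and this is an inverse to the restriction functor up to natural isomorphism; full faithfulness follows formally once one checks that the Stokes condition is preserved in both directions, which in turn reduces to the corresponding one-variable statement of \cite[Lem.\,3.17]{Mochizuki10b} applied slice by slice.

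I expect the main obstacle to be the verification that the Stokes (gradedness) condition is genuinely preserved under the pullback functor, \ie that an object which is graded locally on $\wt\Sigma_I^\circ$ becomes graded locally on all of $\wt\Sigma$. The subtlety is that $\wt\Sigma$ can be non-Hausdorff, so there may be sheets of $\Sigma$ that become comparable only after moving off $D_I^\circ$; one must check that no new incomparabilities are created that would spoil the local splitting. This is controlled precisely by the hypothesis that the stratified $\ccI$-covering is \emph{good} in the sense of \cite[Rem.\,11.12]{Bibi10}: goodness along $D_I^\circ$ propagates to a neighbourhood, so the stratified structure of the order relation on $\wt\Sigma$ near $\varpi^{-1}(x_o)$ is pulled back from $\partial\wt X_I^\circ$. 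Once this compatibility of the order relations is in place, the gradedness condition at a point of $\partial\wt X$ is literally the gradedness condition at the point of $\partial\wt X_I^\circ$ below it (in the retraction), and the equivalence follows. The remaining points — that the retraction lifts to the blow-up, and that restriction is faithful — are routine.
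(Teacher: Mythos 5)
There is a genuine gap, and it sits exactly where you locate the ``main obstacle'': your resolution of it rests on a false claim. You assert that goodness forces each difference $\varphi-\psi$ to have a pole along \emph{every} component $D_i$, $i\in I$, so that the order relation on the sheets of $\wt\Sigma$ is constant along the retraction onto $\partial\wt X_I^\circ$. Goodness only says $\varphi-\psi=x^{-\bmm}\wh\eta(x)$ with $\bmm\in\NN^{\#I}$ and $\wh\eta(0)\neq0$; the exponent $\bmm$ may have vanishing entries. The paper is explicit about this: the non-Hausdorffness of $\wt\Sigma(\cM)$ ``occurs if some difference $\varphi-\psi$ does not have poles along \emph{all} the components of $D'$,'' the index $k'(I)$ is introduced precisely to handle this, and the proof of Proposition~\ref{prop:irrrestrtilde} explicitly allows $m_i=0$ for $i>p$. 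Concretely, if $\varphi-\psi$ has a pole along $D_2$ but not along $D_1$, then at a point of $\partial\wt X$ lying over $D_1^\circ$ (with $\rho_2>0$) the two exponential factors define the \emph{same} section of $\ccI$, so any Stokes filtration there must satisfy $\cL_{\leq\varphi}=\cL_{\leq\psi}$; whereas at the retracted point of $\partial\wt X_I^\circ$ they are distinct and, generically, strictly ordered. The naive pullback of the filtration along the retraction therefore does not even produce a well-defined $\ccI$-filtration over the shallower strata, let alone one satisfying the local gradedness condition. The actual content of the proposition is to show that the filtration over $D_K^\circ$, $K\subsetneq I$, is obtained from the one over $D_I^\circ$ by the appropriate coarsening along the merging sheets of $\wt\Sigma$, and conversely that this coarsened data determines the original filtration; this is the substance of Mochizuki's Lemma~3.17, which is why the paper cites it rather than reproving it (the paper gives no proof of Proposition~\ref{prop:SigmaI}; its appendix proves the complementary statements — Propositions~\ref{prop:closedness}, \ref{prop:openness} and \ref{prop:similar} — about propagation \emph{along} $D_I^\circ$, not transversally to it).

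Your overall skeleton (restriction is clearly well defined; a quasi-inverse should be constructed by extending from the deepest stratum, using that $\partial\wt X_I^\circ$ is a deformation retract of the germ of $\partial\wt X$) is the right starting point, and the unique extension of the underlying local systems and of morphisms is indeed routine. But the step you call ``literally the gradedness condition at the point below'' is exactly where the coarsening of the order relation must be confronted, and as written the argument fails at every point of $\partial\wt X\moins\partial\wt X_I^\circ$ where two exponential factors become indistinguishable.
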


\subsection{The Riemann-Hilbert correspondence (global theory)}\label{subsec:RHglobal}
We now consider the previous correspondence all along $D_I^\circ$. We consider a covering $\cU$ of $D_I^\circ$ by open subsets $U_\alpha$ which are the intersection of $D_I^\circ$ with a local chart on $X$. Any germ~$\cM$ of $D$-meromorphic flat bundle along $D_I^\circ$ gives rise to gluing data $((\cM_\alpha),(\sigma_{\alpha\beta}))$, where $\cM_\alpha=\cM_{|U_\alpha}$, $\sigma_{\alpha\beta}:\cM_{\alpha|U_\alpha\cap U_\beta}\to\cM_{\beta|U_\alpha\cap U_\beta}$ is an isomorphism, and the family $(\sigma_{\alpha\beta})$ satisfies the cocycle property. Any germ $\cM$ of good $D$-meromorphic flat bundle along~$D_I^\circ$ admits a covering $\cU$ such that one can apply the local Riemann-Hilbert correspondence of Section \ref{subsec:RHlocal} to its restriction $\cM_\alpha$ to every $U_\alpha$. Given such a covering~$\cU$, we can consider the category of such good gluing data $\big((\cM_\alpha),(\sigma_{\alpha\beta})\big)$. The local Riemann-Hilbert correspondence gives rise to a commutative diagram of functors between gluing data
\begin{equation}\label{eq:RHglobal}
\begin{array}{c}
\xymatrix{
\big((\cM_\alpha),(\sigma_{\alpha\beta})\big)\ar@{|->}[d]\ar@{|->}[r]&\big((\cL_I^\circ,\cL_{I,\bbullet}^\circ)_{\alpha},(\eta_{\alpha\beta})\big)\ar@{|->}[d]^\gr\\
\big((\cM_{\alpha,\wh D_I^\circ}),(\wh\sigma_{\alpha\beta})\big)\ar@{|->}[r]&\big((\gr\cL_I^\circ,\gr\cL_{I,\bbullet}^\circ)_{\alpha},(\gr\eta_{\alpha\beta})\big)
}
\end{array}
\end{equation}
and the horizontal functors remain equivalences, due to the full faithfulness of the horizontal functors in \eqref{eq:RHlocal}.

Arguing similarly with the equivalence of Proposition \ref{prop:SigmaI}, we obtain the Riemann-Hilbert correspondence.

\begin{theoreme}\label{th:RHDIcirc}
The category $\Mod_\hol\bigl((X,D_I^\circ),D,\wt\Sigma\bigr)$ of germs along $D_I^\circ$ of good $D$-mero\-morphic flat bundles with stratified $\ccI$-covering contained in $\wt\Sigma$ is equivalent to that of germs along $\partial\wt X_I^\circ$ of Stokes-filtered local systems $(\cL,\cL_\bbullet)$ on $\partial\wt X$ with associated stratified $\ccI$-covering contained in~$\wt\Sigma$ and, by restriction, to that of Stokes-filtered local systems $(\cL_I^\circ,\cL_{I,\bbullet}^\circ)$ on $\partial\wt X_I^\circ$ with associated $\ccI$-covering contained in~$\wt\Sigma_I^\circ$.\qed
\end{theoreme}

\subsection{Proof of Proposition \ref{prop:MMhat}}\label{subsec:proofprop1}

By Theorem \ref{th:RHDIcirc}, there exists a germ $\cM_I^\circ$ along~$D_I^\circ$ of good $D$-meromorphic flat bundle whose associated Stokes-filtered local system is $(\gr\cL_I^\circ,\gr\cL_{I,\bbullet}^\circ)$, and it is unique up to isomorphism with respect to this property. A~covering $\cU$ adapted to $\cM$ is also adapted to $\cM_I^\circ$, and the diagram \eqref{eq:RHglobal} shows that the gluing data of $\cM_{\wh D_I^\circ}$ and of $\cM_{I,\wh D_I^\circ}^\circ$ are isomorphic, since they correspond to the same Stokes-filtered gluing data $\big((\gr\cL_I^\circ,\gr\cL_{I,\bbullet}^\circ)_{\alpha},(\gr\eta_{\alpha\beta})\big)$. The uniqueness of~$\cM_I^\circ$ is proved similarly.\qed

\begin{remarque}
The construction of $\cM_I^\circ$ is functorial with respect to $\cM_{|D_I^\circ}$.
\end{remarque}

\subsection{An equivalence of categories}\label{sec:equivcat}
Let $\catA$ be a category and let $G$ be a group. The category $\GcatA$ is the category whose objects are $G$-objects of $\catA$, that is, pairs $(M,\rho)$ where $M$ is an object of $\catA$ and $\rho$ is a morphism $G\to\Aut(M)$, and for which $\Hom_{\GcatA}((M,\rho_M),(N,\rho_N))\subset\Hom_{\catA}(M,N)$ is the subset consisting of morphisms $\varphi:M\to N$ such that, for every $g\in G$, $\varphi\circ\rho_M(g)=\rho_N(g)$.

Let $\wt\Sigma\to\partial\wt X$ be a good stratified $\ccI$-covering and let $\Mod_\hol(X,D,\wt\Sigma)$ denote the full subcategory of that of holonomic $\cD_X$-modules whose objects consist of good meromorphic flat bundles on $(X,D)$ with associated stratified $\ccI$-covering contained in~$\wt\Sigma$.

Let us fix a nonempty subset $I\subset J$, let $D_I^\circ$ the corresponding stratum of $D$, let $x_o\in D_I^\circ$ and let $D_I^\circ(x_o)$ the connected component of $D_I^\circ$ containing $x_o$. Let us fix a local holomorphic decomposition
\[
\bigl(\nb(x_o,X),\nb(x_o,D)\bigr)=(\Omega,D_\Omega)\times\nb(x_o,D_I^\circ),
\]
where~$\Omega$ is an open neighbourhood of $0$ in $\CC^\ell$ and $D_\Omega$ is the union of the coordinate hyperplanes in $\CC^\ell$. The category $\Mod_\hol\bigl((X,D_I^\circ(x_o)),D,\wt\Sigma\bigr)$ has been defined in Section \ref{sec:bonnestructureRH}, and we have the similar category $\Mod_\hol((\Omega,0),D_\Omega,\wt\Sigma_{x_o})$, where $\wt\Sigma_{x_o}$ is the restriction of $\wt\Sigma$ above $\partial\wt\Omega:=\varpi^{-1}(D_\Omega)$.

\begin{theoreme}\label{th:equivcat}
Set $G=\pi_1\bigl(D_I^\circ(x_o),x_o\bigr)$. There is a natural equivalence of categories:
\[
\Mod_\hol\bigl((X,D_I^\circ(x_o)),D,\wt\Sigma\bigr)\simeq G\textup{-}\Mod_\hol\bigl((\Omega,0),D_\Omega,\wt\Sigma_{x_o}\bigr).
\]
\end{theoreme}

\begin{proof}
We set $\partial\wt X_I^\circ(x_o):=\varpi^{-1}(D_I^\circ(x_o))$ and we denote similarly by $\wt\Sigma_I^\circ(x_o)$ the restriction of $\wt\Sigma$ above this set.

\begin{enumerate}
\item\label{enum:equivcat1}
By the Riemann-Hilbert correspondence (Theorem \ref{th:RHDIcirc}), we can replace the category on the left-hand side with that of Stokes-filtered local systems on $\partial\wt X_I^\circ(x_o)$ with associated $\ccI$-covering contained in $\wt\Sigma_I^\circ(x_o)$.

\item\label{enum:equivcat3}
Let $\pi:(E_I^\circ(x_o),y_o)\to (D_I^\circ(x_o),x_o)$ be a universal covering of $D_I^\circ(x_o)$ with base-point $y_o$ above $x_o$ and let $G=\Gal(\pi)$ be the corresponding Galois group. We consider the fibre-product diagram
\[
\xymatrix{
\partial\wt Y_I^\circ(x_o)\ar[d]\ar[r]&\partial\wt X_I^\circ(x_o)\ar[d]^\varpi\\
(E_I^\circ(x_o),y_o)\ar[r]^-\pi& (D_I^\circ(x_o),x_o)
}
\]
and we denote by $\pi^{-1}\wt\Sigma_I^\circ(x_o)$ the corresponding pullback $\pi^{-1}\ccI$-covering of $\partial\wt Y_I^\circ(x_o)$. Then the category considered in \eqref{enum:equivcat1} is equivalent to the category of $G$-Stokes-filtered local systems $(\cL_I^\circ,\cL_{I,\bbullet}^\circ)$ on $\partial\wt Y_I^\circ(x_o)$ with associated $\pi^{-1}\ccI$-covering contained in $\pi^{-1}\wt\Sigma_I^\circ(x_o)$. This is a standard argument.

\item\label{enum:equivcat4} (\Cf \cite[Th.\,4.13]{Mochizuki10b} and Remark \ref{rem:similar})
The sheaf-theoretic restriction functor is an equivalence from the latter category to the category of $G$-Stokes-filtered local systems $(\cL,\cL_\bbullet)$ on $(\partial\wt \Omega)_0\simeq(S^1)^\ell$ with associated $\ccI_{x_o}$-covering contained in~$\wt\Sigma_{x_o}$ (we identify here $(\pi^{-1}\ccI)_{y_o}$ with $\ccI_{x_o}$ and $\pi^{-1}\wt\Sigma_I^\circ(x_o)_{y_o}$ with $\wt\Sigma_{x_o}$). This proof will be reviewed in the appendix.
\item\label{enum:equivcat5}
By applying now the $G$-Riemann-Hilbert correspondence of Theorem \ref{th:RHDIcirc} in the reverse direction to $((\Omega,0),D_\Omega,\wt\Sigma_{x_o})$, one ends the proof of the theorem.\qedhere
\end{enumerate}
\end{proof}

\section{The irregularity complex}\label{sec:irr}
Our aim in this section is to show that, under the goodness assumption as above, the irregularity complex is determined by its restriction to the smooth part of $D$. More precisely, for every $I\subset J$, and for every connected component of $D_I^\circ$, we show that there exists a component $D_k$ of $D$ ($k\in I$) such that $\iota_I^{-1}\Irr_D\cM$ (on this connected component) is determined by $\iota_k^{-1}\Irr_D\cM$.

Let $(\cL,\cL_\bbullet)$ be the Stokes-filtered local system corresponding to a (germ of) good $D$\nobreakdash-meromorphic flat bundle $\cM$. We have $\cL=\wti{}^{-1}\bR\wtj_*\DR\cM_{|X\moins D}$, where
\[
\wti:\partial\wt X\hto\wt X\quad\text{and}\quad\wtj:X\moins D\hto\wt X
\]
are the natural closed and open inclusions. Let us denote by $\cA^\modD_{\wt X}$ (\resp $\cA^\rdD_{\wt X}$) the sheaf on $\wt X$ of holomorphic functions on $X\moins D$ having moderate growth (\resp rapid decay) along $\partial\wt X$. One can then define the moderate (\resp rapidly decaying) de~Rham complex $\DR^\modD\cM$ (\resp $\DR^\rdD\cM$) on $\partial\wt X$. With the goodness assumption, it is known that both have cohomology in degree zero at most. More precisely, the Riemann-Hilbert correspondence recalled in Section \ref{subsec:RHglobal} gives
\[
\cL_{\leq0}=\cH^0\DR^\modD\cM\quad\text{and}\quad\cH^j\DR^\modD\cM=0\text{ for }j\neq0. 
\]
We set $\cL^{>0}:=\cL/\cL_{\leq0}$, and similarly $\DR^{>\modD}\cM$ is defined as the cone of
\[
\DR^\modD\cM\to\wti{}^{-1}\bR\wtj_*\DR\cM_{|X\moins D},
\]
so that $\cL^{>0}=\cH^0\DR^{>\modD}\cM$ (and $\cH^k\DR^{>\modD}\cM=0$ for $k\neq0$).

\begin{proposition}\label{prop:varpiirr}
We have $\Irr_D\cM[1]=\bR\varpi_*\cL^{>0}$.
\end{proposition}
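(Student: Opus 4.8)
The plan is to pass to the real blowing‑up $\varpi\colon\wt X\to X$ and to use the comparison between the de~Rham complex of $\cM$ and its moderate de~Rham complex $\DR^\modD\cM$. Write $i_D\colon D\hto X$ and $j\colon X\moins D\hto X$ for the inclusions. First I would recall Mebkhout's description of the irregularity complex: since $\cM=\cM(*D)$, the cone of the canonical comparison morphism $\DR\cM\to\bR j_*\DR\cM_{|X\moins D}$ — from the de~Rham complex of the $\cD_X$‑module $\cM$ to $\bR j_*$ of the de~Rham complex of the flat bundle $\cM_{|X\moins D}$ — is a complex supported on $D$, canonically isomorphic to $\Irr_D\cM[1]$ (\cf \cite{Mebkhout90}). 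As the mapping cone commutes with $i_D^{-1}$, this reads $\Irr_D\cM[1]\simeq\mathrm{Cone}\bigl(i_D^{-1}\DR\cM\to i_D^{-1}\bR j_*\DR\cM_{|X\moins D}\bigr)$.

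Next I would transport both terms to $\wt X$. On one hand, $\varpi$ is proper and restricts to the identity above $X\moins D$, so proper base change gives $i_D^{-1}\bR j_*\DR\cM_{|X\moins D}\simeq\bR\varpi_*\bigl(\wti{}^{-1}\bR\wtj_*\DR\cM_{|X\moins D}\bigr)$. On the other hand — and this is the substantial point — the goodness assumption, together with the Riemann–Hilbert machinery recalled in Section~\ref{sec:bonnestructureRH} (\cf \cite{Bibi10}), yields a comparison isomorphism $i_D^{-1}\DR\cM\simeq\bR\varpi_*\DR^\modD\cM$, compatibly with the two morphisms towards $i_D^{-1}\bR j_*\DR\cM_{|X\moins D}$ (\resp towards $\bR\varpi_*(\wti{}^{-1}\bR\wtj_*\DR\cM_{|X\moins D})$). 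Since $\bR\varpi_*$ commutes with taking cones, I would then obtain
\[
\Irr_D\cM[1]\simeq\bR\varpi_*\,\mathrm{Cone}\bigl(\DR^\modD\cM\to\wti{}^{-1}\bR\wtj_*\DR\cM_{|X\moins D}\bigr)=\bR\varpi_*\DR^{>\modD}\cM.
\]
Finally, the goodness hypothesis recalled just above the statement gives $\cH^0\DR^{>\modD}\cM=\cL^{>0}$ and $\cH^k\DR^{>\modD}\cM=0$ for $k\neq0$, so $\DR^{>\modD}\cM\simeq\cL^{>0}$ in the derived category, whence $\Irr_D\cM[1]\simeq\bR\varpi_*\cL^{>0}$.

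The only non‑formal ingredient is the comparison isomorphism $i_D^{-1}\DR\cM\simeq\bR\varpi_*\DR^\modD\cM$ together with its compatibility with the comparison morphisms — concretely, the statement that in a neighbourhood of $D$ the moderate de~Rham complex on $\wt X$ reconstructs the de~Rham complex of the meromorphic connection, not merely its restriction to $X\moins D$ where both are tautologically equal. This is where the good formal structure is essential; once it is granted, everything else is bookkeeping with distinguished triangles and the degree‑$0$ concentration of $\DR^{>\modD}\cM$.
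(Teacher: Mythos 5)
Your proof is correct and follows essentially the same route as the paper: it realizes $\Irr_D\cM[1]$ as the cone of the comparison morphism, transports both terms to $\wt X$ via $\bR\varpi_*\DR^\modD\cM=\DR\cM(*D)$ and $\bR\varpi_*\bR\wtj_*\DR\cM_{|X\moins D}=\bR j_*\DR\cM_{|X\moins D}$, commutes $\bR\varpi_*$ with the cone, and invokes the degree-$0$ concentration of $\DR^{>\modD}\cM$. One small mis-emphasis in your closing remark: the isomorphism $\bR\varpi_*\DR^\modD\cM\simeq\DR\cM(*D)$ is not where goodness enters (it holds for an arbitrary meromorphic flat bundle, as a consequence of $\bR\varpi_*\cA^\modD_{\wt X}\simeq\cO_X(*D)$); goodness is what guarantees the degree-$0$ concentration $\cH^k\DR^\modD\cM=0$ for $k\neq0$, which you in fact use correctly in the preceding line.
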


\begin{proof}
We have
\[
\bR\varpi_*\DR^\modD\cM=\DR\cM(*D)\quad\text{and}\quad\bR\varpi_*\bR\wtj_*\DR\cM_{|X\moins D}=\bR j_*\DR\cM_{|X\moins D},
\]
where $j:X\moins D\hto X$ is the inclusion. We then apply \cite[Def.\,3.4-1]{Mebkhout04}.
\end{proof}

\begin{remarque}[The irregularity complex $\Irr_D^*\cM$]\label{rem:Lneg}
Recall that Mebkhout also defined the irregularity complex $\Irr_D^*\cM$ in \cite{Mebkhout90} (\cf also \cite[Def.\,3.4-2]{Mebkhout04}), which is non-canonically isomorphic to the complex $\bR\cHom_{\cD_{X|D}}(\cM,\cQ_D)[-1]$, where $\cQ_D=\cO_{\wh{D}}/\cO_{X|D}$ (\cf\cite[Cor.\,3.4-4]{Mebkhout04}). Let us set $\cL_{\prec0}:=\cH^0\DR^\rdD\cM$. We then have
\begin{starequation}\label{eq:Lneg}
\bR\varpi_*\cL_{\prec0}\simeq\Irr_D^*\cM^\vee,
\end{starequation}%
where $\cM^\vee$ is the holonomic $\cD_X$-module dual to $\cM$. Indeed, According to \cite[(3.13)]{Kashiwara03} we have
\[
\DR(\cQ_D\overset{\bL}\otimes\cM)[-1]\simeq\bR\cHom_{\cD_{X|D}}(\cM^\vee,\cQ_D)[-1]\simeq\Irr^*_D\cM^\vee.
\]
On the other hand, as $\cQ_D$ is flat over $\cO_{X|D}$ (because $\cO_{\wh{X|D}}$ is faithfully flat over~$\cO_{X|D}$) and as $\bR\varpi_*\cA_{\wt X}^\rdD\simeq\cQ_D[-1]$, we have
\[
\DR(\cQ_D\otimes\cM)[-1]\simeq\DR(\cQ_D\overset{\bL}\otimes\cM)[-1]\simeq\bR\varpi_*\DR^\rdD\cM.
\]

We also notice that $\Irr_D^*\cM^\vee=\Irr_D^*\cM^\vee(*D)$ and $\cM^\vee(*D)$ is also a good $D$\nobreakdash-meromorphic flat bundle, which is identified with the dual $D$-meromorphic flat bundle $\cHom_{\cO_X(*D)}(\cM,\cO_X(*D))$.
\end{remarque}

Let us fix $I\subset J$. Near each point $x_o$ of $D_I^\circ$, there exists a local ramification $\rho:\nb(x_o)_{\bmd_I}\to\nb(x_o)$ along $D$ such that the pullback of $\cM$ has a good formal decomposition at each point in $\nb(x_o)_{\bmd_I}$. By the goodness assumption, there exists an index $k(x_o)\in I$ such that each nonzero $\varphi\in\Phi_{x_o}$ has a pole along $D_{k(x_o)}$: indeed, the set $\Phi_{x_o}\cup\{0\}$ is good, so in particular the pole divisors of each of its nonzero elements are totally ordered; the smallest such divisor is nonzero, and we can choose~$k(x_o)$ to be the index of a component of this divisor. One can choose this index constant along any connected component of $D_I^\circ$. For  simplicity, we denote by $k(I)$ the locally constant function $x_o\mto k(x_o)$ on~$D_I^\circ$. Similarly, we denote by $k'(I)$ any index $k'$ such that the following property is satisfied: any $\varphi\in\Phi_{x_o}$ having a pole along $D_{k'}$ has a pole along all the components of $D$ passing through $x_o$ (such a $k'$ exists, due to the goodness condition). Equivalently, the number of $\varphi\in \Phi_{x_o}$ having \emph{no pole} on $D_{k'}$ is maximum (this maximum could be zero).

For every subset $I\subset J$, we have a natural inclusion lifting~$\iota_I$:
\[
\wt\iota_I:\partial\wt X_I^\circ=\varpi^{-1}(D_I^\circ)\hto\varpi^{-1}(D)=\partial\wt X.
\]

\begin{proposition}\label{prop:irrrestrtilde}
Let us fix $I\subset J$ and let us set $k=k(I)$ for simplicity. Then the natural morphism $\wt\iota_I^{-1}\cL^{>0}\to\wt\iota_I^{-1}\bR\wt\iota_{k*}\,\wt\iota{}_k^{-1}\cL^{>0}$ is an isomorphism. The same property holds for $\cL_{\prec0}$ up to replacing $k(I)$ with $k'(I)$.
\end{proposition}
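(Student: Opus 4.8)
The plan is to reduce the statement to a purely local computation on a single fibre of $\varpi$ over a point $x_o\in D_I^\circ$, using the product decomposition $(\nb(x_o,X),\nb(x_o,D))=(\Omega,D_\Omega)\times\nb(x_o,D_I^\circ)$ and the fact that $\cL^{>0}$, $\cL_{\prec0}$ are (by the goodness hypothesis and the Riemann--Hilbert correspondence of Section~\ref{subsec:RHglobal}) concentrated in degree zero; so the whole question is about stalks of local systems on the torus fibres. After a local ramification $\rho_{\bmd_I}$ around the branches of $D$ through $x_o$, which induces an isomorphism over $D_I^\circ$ and hence does not affect the statement, we may assume $\cM$ itself admits a good formal decomposition $\bigoplus_{\varphi\in\Phi_{x_o}}\cE^\varphi\otimes\cR_\varphi$ near $x_o$. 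Since $\cL^{>0}$ and the sheaves built from it are additive in this decomposition, and since the regular factors $\cR_\varphi$ contribute only a regular (hence moderate, tensor-trivial for the purpose of the Stokes structure) piece, it suffices to treat a single rank-one exponential $\cE^\varphi$ with $\varphi=x^{-\bmm}\eta(x)$, $\eta(0)\neq0$, and $m_j>0$ for $j=k$ by the choice $k=k(I)$ (for $\cL_{\prec0}$, $m_j>0$ for all $j$ whenever $m_{k'}>0$, by the choice $k'=k'(I)$).

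The core step is then to understand, for such a $\varphi$, the Stokes-filtration jump locus inside the torus fibre $(\partial\wt\Omega)_0\simeq(S^1)^\ell$ and to see that the sheaf $\cL^{>0}$ (resp.\ $\cL_{\prec0}$) near $\partial\wt X_I^\circ$ is ``pulled back'' from the $D_k$-direction (resp.\ $D_{k'}$-direction). Concretely, on the fibre over $x_o$, with angular coordinates $(\theta_1,\dots,\theta_\ell)$ on $(S^1)^\ell$, the inequality $\varphi\prec 0$ (rapid decay of $\cE^\varphi$) reads $\Rea(x^{-\bmm}\eta)<0$, i.e.\ $\cos(\langle\bmm,\theta\rangle+\arg\eta(0)+o(1))<0$ at the relevant order; the open condition defining $\cL^{>0}$, i.e.\ $\varphi\not\preceq 0$, is the complement of the closure. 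Because $m_k>0$, this condition is, up to a diffeomorphism of $(S^1)^\ell$ given by the unimodular change of angular coordinates sending $(\theta_1,\dots,\theta_\ell)\mapsto(\langle\bmm,\theta\rangle,\theta_{j\neq k})$ — which is a diffeomorphism precisely because $\bmm$ has a unit entry in position $k$, namely we need $\gcd$-type/unimodularity, which holds after possibly adjusting the ramification so that $m_k=1$ — a function of $\langle\bmm,\theta\rangle$ alone, hence constant along the remaining torus directions, i.e.\ along the fibres of the projection $\partial\wt X_I^\circ\to\partial\wt X_{\{k\}}^\circ$. This constancy is exactly the assertion that $\wt\iota_I^{-1}\cL^{>0}$ is the restriction of $\wt\iota_k^{-1}\cL^{>0}$, i.e.\ that the adjunction morphism to $\wt\iota_I^{-1}\bR\wt\iota_{k*}\wt\iota_k^{-1}\cL^{>0}$ is an isomorphism; for $\cL_{\prec0}$ the same argument works with $k'$ because there every relevant $\varphi$ with a pole along $D_{k'}$ has a pole along \emph{all} components, so the rapid-decay locus again only depends on the total linear form.

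A cleaner way to organize this, avoiding ad hoc coordinate changes, is to invoke directly the structure of the $\ccI$-sheaf and the associated Stokes hyperplanes from \cite[\S9.3]{Bibi10}: the Stokes filtration of $\cL$ restricted to $\partial\wt X_I^\circ$ is governed by the $\ccI$-covering $\wt\Sigma_I^\circ$, whose branches are the $\varphi$'s, and the ordering $\varphi\leq\psi$ on $\partial\wt X_I^\circ$ is by moderate growth of $\cE^{\varphi-\psi}$; the defining Stokes hypersurfaces in the fibre are cut out by the vanishing of real parts of the dominant terms $x^{-\bmm}\eta$, which, when $m_k\geq 1$, are transverse to the $D_k$-torus-factor and constant along the complementary factor. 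Then $\cL^{>0}=\cL/\cL_{\leq 0}$ has stalk at a point $\wt x$ of the fibre depending only on which $\varphi$'s satisfy $\varphi\not\preceq 0$ at $\wt x$, a condition that factors through the projection to $\partial\wt X_{\{k\}}^\circ$; the base-change/adjunction isomorphism $\wt\iota_I^{-1}\bR\wt\iota_{k*}\wt\iota_k^{-1}\cL^{>0}\cong$ (pullback along that projection of $\wt\iota_k^{-1}\cL^{>0}$) is then formal, and combined with the factorization it gives the claim. I expect the main obstacle to be the bookkeeping ensuring that the chosen index $k=k(I)$ (resp.\ $k'=k'(I)$) really makes \emph{every} relevant exponential's decay locus independent of the complementary torus directions — i.e.\ correctly exploiting the goodness condition on $\Phi_{x_o}\cup\{0\}$ (total ordering of pole divisors for $k(I)$; the ``pole along $D_{k'}$ $\Rightarrow$ pole along all components'' property for $k'(I)$) uniformly along a connected component of $D_I^\circ$, and checking that the coordinate-change/transversality argument is robust under the ramification $\rho_{\bmd_I}$ and under monodromy around $D_I^\circ$ (handled by Theorem~\ref{th:equivcat}, passing to the universal cover $E_I^\circ(x_o)$ and working $G$-equivariantly). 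The degree-zero concentration of $\DR^\modD\cM$ and $\DR^\rdD\cM$ under goodness, already recalled before Proposition~\ref{prop:varpiirr}, is what lets all of this be checked on stalks rather than at the level of complexes.
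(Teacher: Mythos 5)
Your reduction steps (locality of the question, ramification, Hukuhara--Turrittin decomposition, induction on the rank to reach $\cM=\cE^\varphi$, and the role of $k(I)$, $k'(I)$ in guaranteeing $m_k\neq0$, \resp $p=\ell$) all match the paper. But the core of the argument is missing, and the geometric picture you substitute for it is incorrect. There is no ``projection $\partial\wt X_I^\circ\to\partial\wt X_{\{k\}}^\circ$'': these are \emph{disjoint} locally closed strata of $\partial\wt X$ (one lies over $D_I^\circ$, the other over $D_k\moins\bigcup_{j\neq k}D_j$), and the proposition is not about torus fibres mapping to torus fibres. It is about the behaviour of $\cL^{>0}$ in the \emph{radial} directions of the real blow-up: one must compute $\bR\wt\iota_{k*}$ of the (extension by zero of the) constant sheaf on $\supp\cL^{>0}\cap\partial\wt X_k^\circ$ across the degeneration $\rho_j\to0$, $j\neq k$, and check that its restriction to $\varpi^{-1}(D_I)$ is the constant sheaf on $\supp\cL^{>0}\cap\varpi^{-1}(D_I)$ in degree $0$. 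Moreover your key assertion that the locus $\sum_jm_j\theta_j\in\arg u+[-\pi/2,\pi/2]$ is ``constant along the complementary torus factor'' is simply false whenever more than one $m_j$ is positive; and the claim that it becomes so after a unimodular change of angular coordinates does not help, because the resulting ``first coordinate'' is $\sum_jm_j\theta_j$, not $\theta_k$, so nothing factors through $\partial\wt X_k^\circ$. Declaring the remaining step a ``formal base-change/adjunction isomorphism'' hides exactly the content of the proposition: the same ``formal'' argument would prove the statement for an index $k$ along which some $\varphi$ has no pole, in which case $\wt\iota_k^{-1}\cL^{>0}=0$ on the relevant summand and the statement is false.

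What is actually needed (and what the paper does) is: write $\supp\cL^{>0}$ near $\varpi^{-1}(x_o)$ as the closed set \eqref{eq:ux}, reduce to constant $u$ by an Ehresmann fibration argument (you wave at this with an ``$o(1)$''), identify each component of this set with the product \eqref{eq:ferm} whose trace on $\partial\wt X_k^\circ$ is the subset \eqref{eq:ouv}, and observe that the derived pushforward of $\CC$ along the open inclusion $(0,\epsilon)^{p-1}\hto[0,\epsilon)^{p-1}$ is $\CC$ with no higher direct images; restricting to $\rho_2=\dots=\rho_\ell=0$ then identifies both sides of the adjunction map with $\CC_{\varpi^{-1}(D_I)\cap\supp\cL^{>0}}$. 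Secondary points: you cannot in general arrange $m_k=1$ by adjusting the ramification (and you do not need to); the appeal to Theorem \ref{th:equivcat} and $G$-equivariance is irrelevant here since the morphism is globally defined and the check is purely local; and for $\cL_{\prec0}$ you should treat separately the summands $\cE^\varphi$ with $\varphi$ not having poles along all components through $x_o$, for which both sides vanish.
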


By applying $\bR\varpi_*$ and using Proposition \ref{prop:varpiirr}, we obtain:

\begin{corollaire}\label{cor:irrrestr}
With the notation as in Proposition \ref{prop:irrrestrtilde}, the natural morphism $\iota_I^{-1}\Irr_D(\cM)\to\iota_I^{-1}\bR\iota_{k*}\,\iota_k^{-1}\Irr_D(\cM)$ is an isomorphism. The same property holds for $\Irr_D^*(\cM)$ up to replacing $k(I)$ with $k'(I)$.\qed
\end{corollaire}

\begin{proof}[Proof of Proposition \ref{prop:irrrestrtilde}]
Since the morphism is globally defined, the proof that it is an isomorphism is a local question. We thus fix $x_o\in D_I^\circ$ and work in some neighbourhood $\nb(x_o)$ of $x_o$ that we may shrink if needed.

Let us first assume that $\cM=\cE^\varphi$ (\cf Section \ref{subsec:nota}) for some $\varphi\in\cO_{X,x_o}(*D)$.

-- If $\varphi=0$ in $\cO_{X,x_o}(*D)/\cO_{X,x_o}$, then $\cL^{>0}=0$ and there is nothing to prove.

-- If $\varphi\neq0$ in $\cO_{X,x_o}(*D)/\cO_{X,x_o}$, we set $\varphi(x)=u(x)/x^m$, where $u\in\cO_{X,x_o}$ satisfies $u(x_o)\neq0$, and $m_i\in\NN$ for $i\in I$. In particular, $m_{k(I)}\neq0$. We choose polar coordinates on $\varpi^{-1}(\nb(x_o))$ of the form $(\rho_1,\dots,\rho_\ell,\theta_1,\dots,\theta_\ell,(x_j)_{j\notin I})$ with $\rho_i\in[0,\epsilon)$. We can assume that, in these coordinates, $m_i\neq0$ for $i=1,\dots,p$, $m_i=0$ for $i=p+1,\dots,\ell$, and that $k(I)=\nobreak1$. Then, in these coordinates, $\varpi^{-1}(D\cap\nb(x_o))=\prod_{i=1}^\ell\rho_i=0$ and $\cL^{>0}$ is the constant sheaf of rank one on the closed subset of $\varpi^{-1}(D\cap\nb(x_o))$ defined by
\begin{equation}\label{eq:ux}
\begin{cases}
\sum_{i=1}^pm_i\theta_i\in\arg u(x)+[-\pi/2,\pi/2],\\
\prod_{i=1}^p\rho_i=0,
\end{cases}
\end{equation}
and it is zero outside this closed subset. Let us describe this closed subset. We set $x':=(x_j)_{j\notin I}\in\Delta_\epsilon^{n-\ell}$ (with $0<\epsilon\ll1$) and $(\rho,e^{\sfi\theta})\in[0,\epsilon)^\ell\times(S^1)^\ell$. We can write $u(x)=u(\rho,\theta,x')=u(x_o)e^{g(x)}$ with $g$ holomorphic and $g(0)=0$ and we set $e^{\sfi\theta_o}:=u(x_o)/|u(x_o)|$. A simple computation shows that, if $\epsilon>0$ is small enough, the map
\begin{align*}
[0,\epsilon)^\ell\times(S^1)^\ell\times\Delta_\epsilon^{n-\ell}&\To{(F,\rho,x')} S^1\times[0,\epsilon)^\ell\times\Delta_\epsilon^{n-\ell}\\
(\rho,\theta,x')&\mto\Big(\textstyle\prod_{i=1}^pe^{\sfi m_i\theta_i}\cdot e^{-\sfi(\theta_o+\im g(\rho,\theta,x'))},\rho,x'\Big)
\end{align*}
has everywhere maximal rank (in fact, we have $\partial F/\partial\theta_1(0,\theta,0)\neq0$ on $(S^1)^\ell$). By Ehresmann's theorem, the map $(F,\rho,x')$ is a $C^\infty$ fibration, which can be trivialized on contractible sets like $[-\pi/2,\pi/2]\times[0,\epsilon)^\ell\times\Delta_\epsilon^{n-\ell}$. For our topological computation, we can thus as well consider the situation where $u(x)$ is constant and replace $u(x)$ with $u(x_o)$ in \eqref{eq:ux}.

Each connected component of \eqref{eq:ux} is then homeomorphic to a product
\[
\partial[0,\epsilon)^p\times[a,b]\times (S^1)^{p-1}\times[0,\epsilon)^{\ell-p}\times(S^1)^{\ell-p}\times\Delta_\epsilon^{n-\ell}
\]
for suitable $a,b$. The trace of this set on $\varpi^{-1}(D_{k(I)}^\circ)$ is the set defined by $\prod_{j=2}^\ell\rho_j\neq0$. This is the subset
\begin{equation}\label{eq:ouv}
\{\rho_1=0\}\times(0,\epsilon)^{p-1}\times[a,b]\times (S^1)^{p-1}\times[0,\epsilon)^{\ell-p}\times(S^1)^{\ell-p}\times\Delta_\epsilon^{n-\ell}.
\end{equation}
Its closure is the subset
\begin{equation}\label{eq:ferm}
\{\rho_1=0\}\times[0,\epsilon)^{p-1}\times[a,b]\times (S^1)^{p-1}\times[0,\epsilon)^{\ell-p}\times(S^1)^{\ell-p}\times\Delta_\epsilon^{n-\ell}.
\end{equation}
The ordinary pushforward of the constant sheaf on~\eqref{eq:ouv} by the open inclusion $\eqref{eq:ouv}\hto\nobreak\eqref{eq:ferm}$ is the constant sheaf on \eqref{eq:ferm} and the higher pushforwards vanish. Since $\varpi^{-1}(D_I)$ is the subset of \eqref{eq:ferm} defined by $\rho_i=0$ for $i=2,\dots,\ell$, the restriction of the latter sheaf to $\varpi^{-1}(D_I)$ is the constant sheaf on $\varpi^{-1}(D_I)$, and the morphism $\wt\iota_I^{-1}\cL^{>0}\to\wt\iota_I^{-1}\bR\wt\iota_{k*}\,\wt\iota{}_k^{-1}\cL^{>0}$ is nothing but the identity $\CC_{\varpi^{-1}(D_I)}\to\CC_{\varpi^{-1}(D_I)}$, proving the proposition in this case.

Let us now consider the general case. As already said, the question is local, and we argue now locally on $\partial\wt X$. One can then reduce the question to the non-ramified case and apply the higher dimensional Hukuhara-Turrittin theorem (\cf\eg\cite[Th.\,12.5]{Bibi10}). Let $\cA_{\wt X}$ denote the sheaf of $C^\infty$ functions on $\wt X$ which are holomorphic on~$X^*$ in some neighbourhood of~$\wt x_o$. We can thus assume that $\cA_{\wt X}\otimes\nobreak\varpi^{-1}\cM$ decomposes as the direct sum of terms \hbox{$\cA_{\wt X}\otimes\varpi^{-1}(\cE^\varphi\otimes\cR_\varphi)$}. By~induction on the rank, we can also assume that $\cR_\varphi$ has rank one, and locally on $\varpi^{-1}(D_I^\circ)$ the corresponding local system is trivial, so we can finally assume that $\cM=\cE^\varphi$, a case which was treated above.

The case of $\cL_{\prec0}$ is treated similarly by reducing to the case where $\cM=\cE^\varphi$. Assume first that $\varphi$ has poles along \emph{all} components of $D$ passing through $x_o$ (i.e., $p=\ell$). If we regard all sheaves considered above as external products of constant sheaves of rank one with respect to the product decomposition in \eqref{eq:ouv} and \eqref{eq:ferm}, the case of $\cL_{\prec0}$ is obtained by replacing $[-\pi/2,\pi/2]$ with the complementary open interval in \eqref{eq:ux}, and the corresponding sheaf $\CC_{[a,b]}$ with the sheaf $\CC_{(a',b')}$ for suitable $a',b'$ (\ie the extension by zero of the constant sheaf on $(a',b')$). Then the same argument as above applies to this case.

If the assumption on $\varphi$ does not hold, we have $\iota_I^{-1}\cL_{\prec0}=0$ (since $e^\varphi$ is not of rapid decay \emph{all along} $D$) as well as $\iota_{k'}^{-1}\cL_{\prec0}=0$, so the statement is obvious in this case.
\end{proof}

\section{Proof of Theorem \ref{th:main}}
\subsubsection*{The case $\ell=1$}
We first assume that $I=\{i\}$. The transversal slice $\Omega$ has dimension one and $D_\Omega=\{0\}$. Let us first prove a statement in dimension one. Let $(\cL,\cL_\bbullet)$ be a Stokes-filtered local system on $S^1$ and let $(\gr\cL,(\gr\cL)_\bbullet)$ be the associated graded Stokes-filtered local system. We denote by $\cN$ \resp $\cN'$ the corresponding meromorphic flat bundles on $(\Omega,0)$.

It is well-known that $\cH^k\Irr_{D_\Omega}(\cN)$ and $\cH^k\Irr_{D_\Omega}(\cN')$ have the same rank for any $k$, and vanish except for $k=1$, and similarly for $\Irr_D^*\cN^\vee$ and $\Irr_D^*\cN^{\prime\vee}$. They correspond to $H^0(S^1,\cL^{>0})$ and $H^0(S^1,\gr\cL^{>0})$ on the one hand, $H^1(S^1,\cL_{\prec0})$ and $H^1(S^1,\gr\cL_{\prec0})$ on the other hand (this is of course a particular case of Proposition~\ref{prop:varpiirr} and Remark \ref{rem:Lneg}).

\begin{lemme}\label{lem:LgrL}
There exists an isomorphism between the vector spaces $H^1(S^1,\cL_{\prec0})$ and $H^1(S^1,\gr\cL_{\prec0})$ such that, for any automorphism $\lambda$ of $(\cL,\cL_\bbullet)$, the induced automorphism of $H^1(S^1,\cL_{\prec0})$ corresponds, via this isomorphism, to the automorphism induced by $\gr\lambda$ on $H^1(S^1,\gr\cL_{\prec0})$. The same assertion holds for $H^0(S^1,\cL^{>0})$ and $H^0(S^1,\gr\cL^{>0})$ respectively.
\end{lemme}

\begin{proof}
We start with $\cL_{\prec0}$. Let us cover $S^1$ with open intervals $(U_\alpha)_{\alpha=1,\dots,N}$ such that
\begin{itemize}
\item
every open interval which contains at most one Stokes direction for every pair of distinct exponential factors (\cf\eg Example~1.4 in \cite{Bibi10}),
\item
the intersection of two intervals of the covering is an interval not containing any Stokes direction,\enlargethispage{\baselineskip}%
\item
there are no triple intersections of intervals of the covering.
\end{itemize}
Then this covering is a Leray covering for $\cL_{\prec0}$ (\cf\eg the proof of Lemma\,3.12 in \loccit), and moreover the only nonzero term of the associated \v{C}ech complex is the term in degree one. It follows that
\[
H^1(S^1,\cL_{\prec0})=\bigoplus_{\alpha=1,\dots,N}H^0(U_\alpha\cap U_{\alpha+1},\cL_{\prec0}),
\]
if we set $U_{N+1}=U_1$.

Recall that, on each interval $U_\alpha$, the Stokes-filtered local system $(\cL,\cL_\bbullet)$ is graded, \ie the Stokes filtration splits (\cf\eg Lemma\,3.12 in \loccit). Let us choose a splitting on $U_\alpha\cap U_{\alpha+1}$. Then Theorem 3.5 (and its proof) in \loccit\ shows that any automorphism $\lambda$ is graded with respect to the chosen splitting on $U_\alpha\cap U_{\alpha+1}$. It~follows that the action of the automorphism on $H^0(U_\alpha\cap U_{\alpha+1},\cL_{\prec0})$ is the same as the action of the associated graded automorphism on $H^0(U_\alpha\cap U_{\alpha+1},(\gr\cL)_{\prec0})$, so we have found a model where both actions are equal.

For $\cL^{>0}$ we argue by duality. Recall that the dual local system $\cL^\vee$ is naturally endowed with a Stokes-filtration $\cL^\vee_\bbullet$ (so that $(\cL^\vee,\cL^\vee_\bbullet)$ RH-corresponds to the dual meromorphic flat bundle), that $\cL^{>0}\simeq\cHom_\CC(\cL^\vee_{\prec0},\CC)$ (this is similar to \cite[Lem.\,2.16]{Bibi10}), and this isomorphism is compatible with grading. In particular, it induces isomorphisms
\[
H^0(S^1,\cL^{>0})\simeq H^1(S^1,\cL^\vee_{\prec0})^\vee\quad\text{and}\quad H^0(S^1,\gr\cL^{>0})\simeq H^1(S^1,\gr\cL^\vee_{\prec0})^\vee,
\]
and by the first point applied to $(\cL^\vee,\cL^\vee_\bbullet)$ we obtain a distinguished isomorphism between $H^0(S^1,\cL^{>0})$ and $H^0(S^1,\gr\cL^{>0})$. Let $\lambda$ be an automorphism of $(\cL,\cL_\bbullet)$, and let $\lambda^\vee$ be its dual. Then the first point applied to $\lambda^\vee$ gives the desired property for $\lambda$.
\end{proof}

\begin{proof}[End of the proof of Theorem \ref{th:main} in the case $\ell=1$]
We set $I=\{i\}$, $G=\pi_1(D_i^\circ,x_o)$. By Lemma \ref{lem:LgrL}, given a Stokes-filtered local system $(\cL,\cL_\bbullet)$ endowed with a $G$\nobreakdash-action (\ie a representation $G\to\Aut(\cL,\cL_\bbullet)$), there exists an isomorphism between $H^0(S^1,\cL^{>0})$ and $H^0(S^1,\gr\cL^{>0})$, \resp $H^1(S^1,\cL_{\prec0})$ and $H^1(S^1,\gr\cL_{\prec0})$, so that the induced $G$\nobreakdash-action on $H^0(S^1,\cL^{>0})$ is transformed into the induced graded $G$\nobreakdash-action on $H^0(S^1,\gr\cL^{>0})$, and the induced $G$-action on $H^1(S^1,\cL_{\prec0})$ into the induced graded $G$\nobreakdash-action on $H^1(S^1,\gr\cL_{\prec0})$.

Recall now that $\Irr_D\cM$ is a complex whose cohomology is locally constant on each $D_I^\circ$. On $D_i^\circ$ it reduces to the local system $\cH^1\Irr_{D_i^\circ}\cM$. If we consider the $G$\nobreakdash-Stokes-filtered local system $(\cL,\cL_\bbullet)$ on $S^1$ corresponding to $\cM_{|D_i^\circ}$ by (the proof of) Theorem \ref{th:equivcat}, then $\cH^1\Irr_{D_i^\circ}\cM$ is the local system corresponding to $G$-vector space $H^0(S^1,\cL^{>0})$ that this $G$-Stokes-filtered local system defines. We argue similarly with~$\cM_i^\circ$ and $(\gr\cL,\gr\cL_\bbullet)$, so that the desired isomorphism follows from Lemma~\ref{lem:LgrL}, as explained above. The argument for $\Irr_{D_i^o}^*\cM$ is identical.
\end{proof}

\subsubsection*{The case $\ell\geq2$}
When $\ell=\#I\geq2$, the structure of a Stokes-filtered local system on $(S^1)^\ell$ is more difficult to analyze, although it shares many properties with the case $\ell=1$ (\cf\eg\cite[\S9.e]{Bibi10}). This is why we use another argument. Namely, Proposition \ref{prop:varpiirr} enables us to deduce the case $\ell\geq2$ from the case $\ell=1$.

We set $k=k(I)$ as defined after Proposition \ref{prop:varpiirr}. Let $\nb(D_I^\circ)$ be an open neighbourhood of $D_I^\circ$ in $X$ on which $\cM_I^\circ$ is defined. We claim that
\[
\iota_k^{-1}\cM_I^\circ=\cM_k^\circ{}_{|\nb(D_I^\circ)}.
\]
Indeed, this follows from the uniqueness of $\cM_k^\circ$, and from the fact that $\cM_I^\circ$ also decomposes after ramification along $D$ at each point of $\nb(D_I^\circ)\cap D_i^\circ$ if this neighbourhood is chosen small enough. We then have
\begin{align*}
\Irr_D(\iota_k^{-1}\cM_I^\circ)&\simeq\Irr_D(\cM_k^\circ)_{|\nb(D_I^\circ)}\\
&\simeq\Irr_D(\iota_k^{-1}\cM)_{|\nb(D_I^\circ)}\quad\text{(case $\ell=1$)},
\end{align*}
and therefore, by applying $\iota_I^{-1}\bR\iota_{k*}$,
\[
\iota_I^{-1}\bR\iota_{k*}\iota_k^{-1}\Irr_D(\cM_I^\circ)\simeq\iota_I^{-1}\bR\iota_{k*}\iota_k^{-1}\Irr_D(\cM).
\]
The assertion of Theorem \ref{th:main} for $\Irr_D$ now follows from Corollary \ref{cor:irrrestr}, applied both to $\cM$ and~$\cM_I^\circ$. The case of $\Irr_D^*$ is completely similar.\qed

\appendix
\refstepcounter{section}
\section*{Appendix. Some properties of Stokes-filtered local systems}
In this appendix we keep the setting of Section\,\ref{sec:irr}. We review in Proposition \ref{prop:similar} the proof of \cite[Th.\,4.13]{Mochizuki10b}: by choosing the projection to $D_I^\circ$ of a tubular neighbourhood of $D_I^\circ$ in $X$ and its fibre product over $D_I^\circ$ with a universal covering of~$D_I^\circ$, we are in the situation of \loccit\ except that we do not assume that the $C^\infty$ fibration is topologically trivial. Remark \ref{rem:similar} will then provide the main result used in Step \ref{enum:equivcat4} of the proof of Theorem \ref{th:equivcat}. We will also review some other essential results which are proved in \loccit

\subsection{Grading of a Stokes-filtered local system}\label{subsec:grading}
The result in this subsection is local with respect to $D$, hence we allow a ramification around the components of $D$. We fix a nonempty subset $I\subset J$. We fix a simply connected open set $U_I^\circ\subset D_I^\circ$.

We assume that $(\cL,\cL_\bbullet)$ is non-ramified in the neighbourhood of $U_I^\circ$. The covering~$\wt\Sigma_I^\circ$ can then be trivialized on $U_I^\circ\times(S^1)^\ell=\varpi^{-1}(U_I^\circ)$, and we set
\[
\wt\Sigma_I^\circ=\Phi\times U_I^\circ\times(S^1)^\ell,
\]
where $\Phi$ is a finite subset of $\Gamma\bigl(U_I^\circ,(\cO_X(*D)/\cO_X)_{|U_I^\circ}\bigr)$. Moreover, by the goodness assumption on $\wt\Sigma$, $\Phi$ is a good set, namely, for every pair $\varphi\neq\psi$, the divisor of $\varphi-\psi$ is negative. The set $\St(\varphi,\psi)\subset U_I^\circ\times(S^1)^\ell$ of Stokes directions is smooth over~$U_I^\circ$ with fibers equal to a union of translated codimension-one subtori
\begin{equation}\label{eq:grading}
\St(\varphi,\psi)_x=\Big\{(\theta_1,\dots,\theta_\ell)\in(S^1)^\ell\mid\ts\sum_j m_j\theta_j-\arg c(x)=\pm\pi/2\bmod2\pi\Big\},
\end{equation}
where $c(x)$ is an invertible holomorphic function on $U_I^\circ$ and $(m_1,\dots,m_\ell)\in\NN^\ell\moins\{0\}$. We denote by $\St(\Phi)$ the union of the subsets $\St(\varphi,\psi)$ for all pairs $\varphi\neq\psi\in\Phi$.

Let us fix
\[
\theta_o=(\theta_{o,1},\dots,\theta_{o,\ell})\in(S^1)^\ell\quad\text{and}\quad \alpha_1,\dots,\alpha_\ell\in\NN^*
\]
such that $\gcd(\alpha_1,\dots,\alpha_\ell)=1$. The map $\theta\mto(\alpha_1\theta+\theta_{o,1},\dots,\alpha_\ell\theta+\theta_{o,\ell})$ embeds~$S^1$ in~$(S^1)^\ell$. In the following, $S^1_{\alphag,\theta_o}$ denotes this circle.

\begin{proposition}\label{prop:grading}
Let $A^\circ$ be an open interval of length $<2\pi$ in $S^1_{\alphag,\theta_o}$ and let $A$ be its closure. Assume that $A$ satisfies the following property.
\begin{itemize}
\item
For every $x\in U_I^\circ$ and every pair $\varphi\neq\psi\in\Phi$,
\[
\#\bigl(A\cap\St(\varphi,\psi)\bigr)=\#\bigl(A^\circ\cap\St(\varphi,\psi)\bigr)\leq1.
\]
If moreover $U_I^\circ$ is contractible, then $(\cL,\cL_\bbullet)$ is graded when restricted to a sufficiently small neighbourhood $U_I^\circ\times\nb(A)$ in $U_I^\circ\times(S^1)^\ell$.
\end{itemize}
\end{proposition}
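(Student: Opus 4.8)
The plan is to reduce the statement to the classical one-dimensional splitting of Stokes-filtered local systems by slicing $(S^1)^\ell$ with circles of the form $S^1_{\alphag,\theta_o}$ and then deforming. First I would fix a point $x_o\in U_I^\circ$ and observe that, because the functions $c(x)$ appearing in \eqref{eq:grading} are invertible holomorphic on $U_I^\circ$, the family $x\mapsto\St(\Phi)_x$ of Stokes hypersurfaces in $(S^1)^\ell$ moves smoothly with $x$; shrinking $U_I^\circ$ if necessary (keeping it contractible) one arranges that the counting condition $\#(A\cap\St(\varphi,\psi))=\#(A^\circ\cap\St(\varphi,\psi))\le 1$ holds uniformly in $x$, not just at $x_o$. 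Concretely, the intersection of $A$ with $\St(\varphi,\psi)_x$ is the solution set of $\alphag\cdot\mathbf m\,\theta+(\mathbf m\cdot\theta_o-\arg c(x))=\pm\pi/2\bmod 2\pi$, an affine condition in the single variable $\theta$; since $\alphag\cdot\mathbf m=\sum_j\alpha_j m_j>0$ (all $\alpha_j\ge 1$ and $\mathbf m\ne 0$), this is a nonempty finite set whose cardinality on a fixed arc is locally constant in $x$, so the hypothesis on $A$ at $x_o$ propagates to a neighbourhood.

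Next I would invoke the one-dimensional grading result — that a Stokes-filtered local system on $S^1$ splits (non-canonically) over any open arc of length $<2\pi$ meeting each $\St(\varphi,\psi)$ in at most one point — applied to the restriction $(\cL,\cL_\bbullet)|_{\{x\}\times S^1_{\alphag,\theta_o}}$ for each $x\in U_I^\circ$. This is \cite[Lem.\,3.12]{Bibi10} (the proof of the case $\ell=1$ recalled later in the text). The resulting splitting over $A^\circ$ at the base point $x_o$ gives a decomposition $\cL|_{\{x_o\}\times\nb(A)}\simeq\bigoplus_\varphi \gr_\varphi\cL$; one then has to extend this decomposition in the $U_I^\circ$-direction. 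Because $U_I^\circ$ is contractible the underlying local system $\cL$ on $U_I^\circ\times\nb(A)$ is constant, so parallel transport along $U_I^\circ$ carries the splitting at $x_o$ to every fibre; what must be checked is that the transported splitting is still compatible with the Stokes filtration $\cL_\bbullet$ at each $x$. This is where the uniformity arranged in the first step is used: the filtration $\cL_{\le\varphi}$ over $\{x\}\times\nb(A)$ is determined by the partial order on $\Phi$ at points of $\nb(A)$, and since no new Stokes hypersurface enters $\nb(A)$ as $x$ varies, this order is constant, so the flat extension of each $\gr_\varphi\cL$ lands in the correct filtration step. Hence the splitting is global over $U_I^\circ\times\nb(A)$, which is the claim.

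The main obstacle will be the last point: controlling the Stokes filtration under the $U_I^\circ$-parallel transport, i.e.\ showing that a splitting valid in one fibre stays a splitting of the filtration — not merely of the bare local system — in nearby fibres. The reason this is delicate is that over $\nb(A)$ (a neighbourhood of $A$ in the full torus $(S^1)^\ell$, not just in $S^1_{\alphag,\theta_o}$) there are in general Stokes hypersurfaces that do meet $\nb(A)$; the hypothesis only controls their trace on the circle $A$. So one must be careful that ``sufficiently small neighbourhood $\nb(A)$'' is chosen after the circle $A$ and the counting condition are fixed, small enough that the combinatorics of $\St(\Phi)$ on $\nb(A)$ is governed by its combinatorics on $A$; with $\nb(A)$ so chosen, the order among the $\varphi$'s is locally constant on $U_I^\circ\times\nb(A)$ and the extension argument goes through. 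The remaining verifications — that the $\alphag$-slicing circle can be chosen with the required property (using $\gcd(\alpha_i)=1$ so that $S^1_{\alphag,\theta_o}$ is an embedded circle and, for generic $\theta_o$, avoids the finitely many ``bad'' configurations where two Stokes points collide on $A$) — are routine and I would only sketch them.
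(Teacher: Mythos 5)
There is a genuine gap, and it sits exactly where you yourself locate ``the main obstacle''. Your plan is: split the filtration on one slice $\{x_o\}\times\nb(A)$, then extend the summands flatly in the $U_I^\circ$-direction using contractibility, and argue that the transported summands stay compatible with the filtration because ``the order among the $\varphi$'s is locally constant on $U_I^\circ\times\nb(A)$''. That last justification does not work: the subsheaves $\cL_{\leq\psi}$ are \emph{not} locally constant on $U_I^\circ\times\nb(A)$ --- the Stokes hypersurfaces $\St(\varphi,\psi)$ genuinely traverse $\nb(A)$ (the hypothesis only bounds their trace on the circle $A$, and even there allows one crossing), so the partial order on $\Phi$ is not constant on this set and the containment $L_\varphi\subset\cL_{\leq\psi}$ on the locus $\{\varphi\leq\psi\}$ is not preserved by flat transport for free. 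The same difficulty already appears one step earlier: the one-dimensional splitting lemma applied to the restriction to the embedded circle $S^1_{\alphag,\theta_o}$ only yields a splitting on (a neighbourhood of $A$ inside) that circle, and passing from this one-dimensional slice to a full torus neighbourhood $\nb(A)\subset(S^1)^\ell$ is itself a nontrivial extension problem of the same nature. Also, a minor misreading: the counting condition is \emph{assumed} for every $x\in U_I^\circ$, so there is nothing to ``arrange by shrinking $U_I^\circ$'' --- and shrinking would change the statement, whose conclusion concerns the given contractible $U_I^\circ$.

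The mechanism that closes this gap in the paper is cohomological, not transport-theoretic: the existence of a splitting of $(\cL,\cL_\bbullet)$ over a subset $Z$ is obstructed by classes in $H^1(Z,\cL_{<\varphi})$ (this is the content of the proof of \cite[Lem.\,9.26]{Bibi10}), so one proves $H^k(U_I^\circ\times A,\cL_{<\varphi})=0$ for $k\geq1$. This is done by proper base change for $\varpi:U_I^\circ\times A\to U_I^\circ$, which identifies the stalk of $R^k\varpi_*\cL_{<\varphi}$ at $x$ with $H^k(A,\cL_{<\varphi|\{x\}\times A})$; the hypothesis on $A$ and the one-dimensional argument kill this for $k\geq 1$; the grading near each slice then shows $\varpi_*\cL_{<\varphi}$ is locally constant, hence constant on the contractible $U_I^\circ$, and the Leray spectral sequence gives the global vanishing, whence the splitting over $U_I^\circ\times A$ and on a neighbourhood. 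If you want to keep your fibrewise picture, you must replace ``parallel transport of the splitting'' by this obstruction-vanishing argument (or an equivalent \v{C}ech/torsor argument); as written, the step from one fibre to the family is asserted rather than proved.
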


\begin{proof}
We first prove that, for every $\varphi\in\Phi$, we have $H^k(U_I^\circ\times A,\cL_{<\varphi})=0$ for $k\geq1$. Note that, since $\varpi:U_I^\circ\times A\to U_I^\circ$ is proper, $R^k\varpi_*\cL_{<\varphi|U_I^\circ\times A}$ is compatible with base change, hence its germ at $x$ is equal to $H^k(A,\cL_{<\varphi|\{x\}\times A})$. By our assumption on~$A$, this is also equal to $H^k(A^\circ,\cL_{<\varphi|\{x\}\times A^\circ})$, and by the proof of \cite[Lem.\,9.26]{Bibi10}, this is zero for $k\geq1$. As a consequence, $R^k\varpi_*\cL_{<\varphi|U_I^\circ\times A}=0$ for $k\neq0$.

We argue as in \loccit\ to obtain that $(\cL,\cL_\bbullet)$ is graded in the neighbourhood of $\{x\}\times A$ for every $x\in U_I^\circ$. In particular, it is easy to check that $\varpi_*\cL_{<\varphi|U_I^\circ\times A}$ is locally constant, hence constant, on $U_I^\circ$. Since $U_I^\circ$ is assumed contractible, we obtain the vanishing of $H^k(U_I^\circ\times A,\cL_{<\varphi})$ ($k\geq1$). Using once more the argument of \loccit, we obtain the grading property all over $U_I^\circ\times A$, hence in some open neighbourhood of it.
\end{proof}

By mimicking the proof of \cite[Th.\,3.5\,\&\,Prop.\,9.21]{Bibi10}, we also obtain the following proposition.

\begin{proposition}\label{prop:gradedmorphism}
Let $\lambda:(\cL,\cL_\bbullet)\to(\cL',\cL'_\bbullet)$ between Stokes-filtered local systems as considered in the beginning of this subsection with the same set $\Phi$. For $A$ as in Proposition \ref{prop:grading}, there exist gradings of both Stokes-filtered local systems on $U_I^o\times\nb(A)$ with respect to which $\lambda$ is graded.\qed
\end{proposition}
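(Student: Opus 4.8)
The plan is to adapt the proof of Proposition \ref{prop:grading} to the relative setting where $U_I^\circ$ is replaced by a simply connected $U_I^\circ$ and $\Phi,\Phi'$ are allowed to differ, working morphism-wise rather than object-wise. First I would reduce to the non-ramified case over a simply connected $U_I^\circ$, trivialize $\wt\Sigma_I^\circ$ as $\Phi\times U_I^\circ\times(S^1)^\ell$ (resp. $\Phi'$ for $\cL'$), and replace $\Phi$ by $\Phi\cup\Phi'$ so that both Stokes-filtered local systems are indexed by the same good set; this is harmless since enlarging $\Phi$ by extra exponential factors whose associated graded pieces are zero does not change the objects. Then I would fix $A$ as in Proposition \ref{prop:grading} — an arc in some $S^1_{\alphag,\theta_o}$ crossing each Stokes hypersurface $\St(\varphi,\psi)$ at most once, with the same incidence count for $A$ and $A^\circ$.

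Next, the core step: show that both $(\cL,\cL_\bbullet)$ and $(\cL',\cL'_\bbullet)$ are graded on a neighbourhood $U_I^\circ\times\nb(A)$ by Proposition \ref{prop:grading}, and that any morphism $\lambda$ between them is compatible with suitable gradings. The argument mimics \cite[Th.\,3.5\,\&\,Prop.\,9.21]{Bibi10}: over each fibre $\{x\}\times A$ one produces, via the vanishing $R^k\varpi_*\cL_{<\varphi|U_I^\circ\times A}=0$ for $k\neq0$ established in Proposition \ref{prop:grading} (applied to $\cL$, $\cL'$, and $\cHom(\cL,\cL')$, the last also being a Stokes-filtered local system with the same set $\Phi$ after the enlargement), a splitting; the point is that the space of gradings is a torsor under a unipotent sheaf whose higher cohomology over $U_I^\circ\times A$ vanishes, so local choices glue to a global grading, and $\lambda$ being filtered becomes graded with respect to these choices. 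Once $\lambda$ is graded on $U_I^\circ\times\nb(A)$, its matrix entries between pieces indexed by $\varphi\in\Phi$ are sections of the underlying local systems $\cL_\varphi$, $\cL'_\varphi$, and the statement is exactly the claim of Proposition \ref{prop:gradedmorphism}.

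The main obstacle I anticipate is the control of the \emph{extension} from $U_I^\circ\times A^\circ$ (or from a fibre) to $U_I^\circ\times\nb(A)$ while keeping the grading and the morphism simultaneously adapted: one must ensure that the gradings chosen to split $\cL$ and $\cL'$ can be chosen so that a single $\lambda$ is graded for both, which forces one to work with the internal $\cHom$ and to track that the relevant obstruction classes live in $H^{\geq1}(U_I^\circ\times A,-)$ of the appropriate order-$<0$ subsheaf — precisely where the combinatorial hypothesis on $A$ (each Stokes hypersurface met at most once, no count change) and the contractibility of $U_I^\circ$ are used. Apart from this, the verification that $\cHom$ of two Stokes-filtered local systems with set $\Phi$ is again one with set $\Phi$ (so that Proposition \ref{prop:grading} applies to it), and the bookkeeping of the tori $\St(\varphi,\psi)$ under the enlargement of $\Phi$, are routine and I would not spell them out in detail.
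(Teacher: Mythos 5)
Your overall plan — adapt the one\nobreakdash-dimensional argument of \cite[Th.\,3.5]{Bibi10} and its higher-dimensional version \cite[Prop.\,9.21]{Bibi10} to the relative setting over a contractible $U_I^\circ$, using the vanishing established in Proposition~\ref{prop:grading} to make local splittings glue — is exactly what the paper's one\nobreakdash-line proof proposes, so you are on the right track. Two points in your write\nobreakdash-up, however, need correcting.

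First, the preliminary step of enlarging $\Phi$ to $\Phi\cup\Phi'$ is unnecessary: the statement already assumes both Stokes-filtered local systems share the \emph{same} set $\Phi$. Worse, the step would not in general be harmless if $\Phi\neq\Phi'$, since the union of two good sets need not be good, so the hypothesis on $A$ (that it meets each $\St(\varphi,\psi)$ at most once, with equal counts for $A$ and $A^\circ$) would not automatically transfer.

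Second, and more seriously, your parenthetical claim that $\cHom(\cL,\cL')$ is ``a Stokes-filtered local system with the same set $\Phi$'' is false, and the intended application of Proposition~\ref{prop:grading} to it does not go through as stated. The internal $\cHom$ has index set $\Phi-\Phi=\{\psi-\varphi : \varphi,\psi\in\Phi\}$, not $\Phi$; this set is generally not good (differences of differences need not have a dominant monomial), and its Stokes set $\St(\Phi-\Phi)$ strictly contains $\St(\Phi)$. Since $A$ is only assumed to satisfy the combinatorial hypothesis relative to $\St(\Phi)$, you cannot invoke Proposition~\ref{prop:grading} for $\cHom(\cL,\cL')$. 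The argument that actually works — and is what Th.\,3.5/Prop.\,9.21 of \cite{Bibi10} do — is to first grade $\cL$ and $\cL'$ separately by Proposition~\ref{prop:grading}, write $\lambda$ as a matrix of components $\lambda_{\varphi\psi}:\gr_\varphi\cL\to\gr_\psi\cL'$ vanishing unless $\varphi\leq\psi$ somewhere on $\nb(A)$, and then kill the strictly off-diagonal entries by an inductive modification of the chosen splittings via graded-unipotent automorphisms. The relevant obstruction at each step is a section of a sheaf of the form $\beta_{\varphi\leq\psi}\cHom(\gr_\varphi\cL,\gr_\psi\cL')$, whose boundary behavior is governed precisely by $\St(\varphi,\psi)$ for $\varphi,\psi\in\Phi$ — i.e.\ exactly the Stokes walls that the hypothesis on $A$ controls; no grading of the full $\cHom(\cL,\cL')$ is ever needed. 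With this correction, the proof reduces, as you intend, to the vanishing over $U_I^\circ\times A$ already proved in Proposition~\ref{prop:grading} together with the contractibility of $U_I^\circ$.
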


\subsection{Closedness}\label{subsec:closedness}
Let $U_I^\circ$ be an open subset of $D_I^\circ$ with closure $\ov{U_I^\circ}$ in $D_I^\circ$ and boundary $\partial U_I^\circ$, and let $j:U_I^\circ\hto\ov{U_I^\circ}$ and $\wtj:\varpi^{-1}(U_I^\circ)\to\varpi^{-1}(\ov{U_I^\circ})$ be the open inclusions. Let $(\cL,\cL_\bbullet)$ be a Stokes-filtered local system on $\varpi^{-1}(U_I^\circ)$ with associated covering contained in $\wt\Sigma_I^\circ{}_{|U_I^\circ}$. Assume that
\begin{enumerate}
\item[$(*)$]
any point $x\in\partial U_I^\circ$ has a fundamental system of open neighbourhoods $V$ in $D_I^\circ$ such that $V\cap U_I^\circ$ and $V\cap\ov{U_I^\circ}$ are contractible.
\end{enumerate}

\begin{proposition}\label{prop:closedness}
Under this assumption, the functor $\wtj_*$ induces an equivalence between the category of Stokes-filtered local systems $(\cL,\cL_\bbullet)$ on $\varpi^{-1}(U_I^\circ)$ with associated $\ccI$\nobreakdash-covering contained in $\wt\Sigma_I^\circ{}_{|U_I^\circ}$, and the category of Stokes-filtered local systems on $\varpi^{-1}(\ov{U_I^\circ})$ with associated $\ccI$\nobreakdash-covering contained in $\wt\Sigma_I^\circ{}_{|\ov{U_I^\circ}}$, a quasi-inverse functor being the restriction $\wtj{}^{-1}$.
\end{proposition}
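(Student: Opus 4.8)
The plan is to prove Proposition \ref{prop:closedness} by reducing, via the Riemann-Hilbert-type dictionary, to two separate statements: one about the underlying local systems (a purely topological statement) and one about the Stokes filtrations. First I would treat the underlying local systems: the hypothesis $(*)$ guarantees that $U_I^\circ$ is homotopy equivalent to $\ov{U_I^\circ}$ and more precisely that the inclusion $j:U_I^\circ\hto\ov{U_I^\circ}$ induces an equivalence of categories of local systems via $j_*$, with quasi-inverse $j^{-1}$; this is the classical fact that a locally contractible pair with locally contractible boundary behaves like a deformation retract. Pulling this back along the proper map $\varpi$ with its $(S^1)^\ell$-fibers (and using that $\varpi$ is a fibration over each stratum), I get that $\wtj_*$ is an equivalence on underlying local systems on $\partial\wt X$, with quasi-inverse $\wtj{}^{-1}$.

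Next I would upgrade this to the Stokes-filtered setting. The point is that $\wtj_*$ of a Stokes-filtered local system is still Stokes-filtered with the same associated covering, and that the subsheaves $\cL_{<\varphi}$ behave compatibly: concretely, one checks that $\wtj_*(\cL_{<\varphi})=(\wtj_*\cL)_{<\varphi}$ by working fiberwise over points $x\in\partial U_I^\circ$, using the fundamental system of neighbourhoods $V$ from $(*)$ for which $V\cap U_I^\circ$ and $V\cap\ov{U_I^\circ}$ are contractible, so that pushforward computes correctly and preserves the defining local-constancy/filtration axioms of a Stokes structure (as in \cite[Chap.\,1]{Bibi10} and \cite[Lem.\,3.17]{Mochizuki10b}). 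The covering being contained in $\wt\Sigma_I^\circ$ extends automatically since the exponential factors $\varphi$ extend across $\partial U_I^\circ$ by the goodness and the étalé-space description of $\wt\Sigma_I^\circ$. Conversely, restricting a Stokes-filtered local system on $\varpi^{-1}(\ov{U_I^\circ})$ by $\wtj{}^{-1}$ obviously preserves all the structure, and the unit and counit of the adjunction $(\wtj^{-1},\wtj_*)$ are isomorphisms by the local-system case together with the fiberwise filtration check. This gives the asserted equivalence with quasi-inverse $\wtj{}^{-1}$.

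The main obstacle I expect is the fiberwise verification that $\wtj_*$ sends the Stokes filtration to a genuine Stokes filtration — that is, that the level-wise subsheaves $\cL_{<\varphi}$ of the pushforward still satisfy the defining axioms (local gradedness along Stokes hypersurfaces, and the fact that $\gr$ is a local system on $\wt\Sigma_I^\circ$). This requires controlling the higher pushforwards $R^k\wtj_*\cL_{<\varphi}$ near a boundary point $x\in\partial U_I^\circ$: one wants these to vanish for $k\ge1$ and the $k=0$ term to be constant on the relevant contractible neighbourhoods, which is exactly where hypothesis $(*)$ enters, via an argument parallel to the proof of Proposition \ref{prop:grading} above (using properness of $\varpi$ over $\ov{U_I^\circ}$, base change, and the vanishing $H^k(A,\cL_{<\varphi|\{x\}\times A})=0$ for $k\ge1$ from \cite[Lem.\,9.26]{Bibi10}). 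Once that cohomological vanishing is in place, the rest is formal: the equivalence follows from the local-system equivalence by tracking the filtration through the quasi-inverses.
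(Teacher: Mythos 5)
Your overall architecture — extend the underlying local systems via hypothesis $(*)$, then show the Stokes filtrations also extend, with quasi-inverse $\wtj^{-1}$ — is the same as the paper's, and the observation that $(*)$ lifts to $\varpi^{-1}(\ov{U_I^\circ})$ (so $\wtj_*$ is an equivalence on plain local systems) is correctly identified. But you have misidentified the crucial technical lemma, and as a result the hard part of the proof is not actually addressed.

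The paper does not need, and does not prove, vanishing of $R^k\wtj_*\cL_{<\varphi}$ for $k\geq1$ at the boundary (the vanishing $H^k(A,\cL_{<\varphi})=0$ that you quote belongs to the proof of Proposition \ref{prop:grading}, which is used here only as input to get the splitting \eqref{eq:decomp} on the open side). After invoking that splitting $\cL_{\leq\varphi}\simeq\bigoplus_{\psi}\beta_{\psi\leq\varphi}\gr_\psi\cL$ on $(V\cap U_I^\circ)\times\nb(A^\circ)$, the real point is the commutation of the extension-by-zero functor $\beta_{\psi\leq\varphi}$ with $\wtj_*$: one must show that for a local system $L$ the natural map $\beta_{\psi\leq\varphi}L\to\wtj_*\beta_{\psi\leq\varphi}\wtj^{-1}L$ is an isomorphism. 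This is not formal, and it is not a consequence of the local-system equivalence: near a point $(x_o,\theta_o)\in\St(\varphi,\psi)_{x_o}$ the open subset $\{\psi\leq\varphi\}$ moves with $x$, and one has to verify that the germ of $\wtj_*\wtj^{-1}\beta_{\psi\leq\varphi}\CC$ at such a point is zero. The paper does this by a concrete computation: after choosing coordinates, the relevant pushforward to $V$ is via a proper fibration with fibre a semi-closed interval, and the cohomology with compact support of a semi-closed interval vanishes. Your proposal replaces this with an appeal to "an argument parallel to Proposition \ref{prop:grading}", which does not produce the required statement; the $\beta_{\psi\leq\varphi}$-commutation is qualitatively different from the Leray-type vanishing used in Proposition \ref{prop:grading}, because it concerns the boundary of the support of the extension by zero rather than the cohomology of an interval.

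A smaller omission: your argument does not explain why $\wtj_*$ of a morphism between Stokes-filtered local systems respects the extended filtrations. The paper handles this by first putting the morphism in graded form via Proposition \ref{prop:gradedmorphism} on $(V\cap U_I^\circ)\times\nb(A^\circ)$ and then observing that $\wtj_*$ of a graded morphism is graded for the extended splitting; saying "the unit and counit of the adjunction are isomorphisms" addresses essential surjectivity and fullness for the underlying local systems, but not filtered compatibility of the extended morphisms.
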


\begin{proof}
Since the functor is globally defined, the question is local near a point $x_o\in\nobreak\partial U_I^\circ$. Moreover, as in Section\,\ref{subsec:grading}, we can assume that $\wt\Sigma_I^\circ$ is a trivial covering on some neighbourhood of $x_o$. It is enough to prove the statement in the non-ramified case since, by~uniqueness the construction, it will descend by means of the Galois action of the ramification. We will work with the corresponding set $\Phi$ of exponential factors.

Firstly, we note that Assumption~$(*)$ also holds for $\varpi^{-1}(\ov{U_I^\circ})$, since any point in $\varpi^{-1}(x)$ has a fundamental systems of neighbourhoods of the form of the product of neighbourhoods~$V$ with a product of $\ell$ open intervals. It follows that the local system~$\cL$ extends in a unique way as a local system on $\varpi^{-1}(\ov{U_I^\circ})$, and the latter is~$\wtj_*\cL$. Similarly, a morphism between local systems extends in a unique way by the functor $\wtj_*$. The same property holds for the local systems $\gr_\psi\cL$ for $\psi\in\Phi$.

Let us first show that the functor $\wtj_*$ takes values in the category of Stokes-filtered local systems. For a pair $\varphi\neq\psi\in\Phi$, we denote by $\beta_{\psi\leq\varphi}$ the functor composed of the restriction to the open subset where $\varphi\leq\psi$ (\ie $\reel(\varphi-\psi)<0$) and the extension by zero to the whole space. The point is to check that every $\wtj_*\cL_{\leq\varphi}$ decomposes as $\bigoplus_{\psi\in\Phi}\beta_{\psi\leq\varphi}\wtj_*\gr_\psi\cL$ in the neighbourhood of every point $(x_o,\theta_o)$ of $\varpi^{-1}(x_o)$. If we fix a small interval $A^\circ$ containing this point as in Proposition \ref{prop:grading}, we find that, according to this proposition and Assumption~$(*)$,
\begin{equation}\label{eq:decomp}
\cL_{\leq\varphi|(V\cap U_I^\circ)\times\nb(A^\circ)}\simeq\bigoplus_{\psi\in\Phi}\beta_{\psi\leq\varphi}(\gr_\psi\cL)_{|(V\cap U_I^\circ)\times\nb(A^\circ)}.
\end{equation}
We are thus reduced to checking that, for a local system $L$, the natural morphism $\beta_{\psi\leq\varphi}L\to\wtj_*\beta_{\psi\leq\varphi}\wtj{}^{-1}L$ is an isomorphism: we will apply this to the local system $L=\wtj_*(\gr_\psi\cL)_{|(V\cap U_I^\circ)\times\nb(A^\circ)}$ for any $\psi$. The question is then local, and we can work in the neighbourhood of $(x_o,\theta_o)$, with the constant sheaf of rank one as the given local system.

If $(x_o,\theta_o)\notin\St(\varphi,\psi)_{x_o}$, the result is easy. We will thus focus on the case where $(x_o,\theta_o)\in\St(\varphi,\psi)_{x_o}$. This can be written as $\sum m_j\theta_{o,j}-\arg c(x_o)=\pm\pi/2$. We will consider the case $+\pi/2$, the other one being similar. We need to check that the germ at $(x_o,\theta_o)$ of $\wtj_*\wtj{}^{-1}\beta_{\psi\leq\varphi}\CC$ is zero for any such $(x_o,\theta_o)$. For that purpose, it is enough to prove that, for small enough closed neighbourhoods $V$ of $x_o$ and $\nb(\theta_o)$ of $\theta_o$, the cohomology of the sheaf on
\begin{equation}\label{eq:closedness}
(V\times\nb(\theta_o))\cap\bigl\{\ts\sum m_j\theta_j-\arg c(x)\in[\pi/2-\epsilon,\pi/2]\bigr\}
\end{equation}
which is zero on
\[
(V\times\nb(\theta_o))\cap\bigl\{\ts\sum m_j\theta_j-\arg c(x)=\pi/2\bigr\}
\]
and constant on the complementary set, is zero for $0<\epsilon\ll1$ and $V$ small enough. We~can regard $\sum m_j\theta_j-\arg c(x_o)-\pi/2$ as a coordinate $\theta'$ near $\theta_o$ vanishing at~$\theta_o$, and we can choose the neighbourhood $\nb(\theta_o)$ of the form $[-2\epsilon,2\epsilon]\times[-2\epsilon,2\epsilon]^{\ell-1}$ accordingly. For $V$ small enough, the set \eqref{eq:closedness} is a topological fibration above $V$, and the fiber over $x\in V$ is the product of $[-2\epsilon,2\epsilon]^{\ell-1}$ with the interval
\[
\theta'\in\arg c(x)-\arg c(x_o)+[-\epsilon,0].
\]
Since the projection to $V$ is proper, the base change formula shows that the pushforward to $V$ of this sheaf is identically zero, as the cohomology with compact support of a semi-closed interval is zero. Hence its global cohomology~on \eqref{eq:closedness} is also zero.

The next step is to show that the extension by $\wtj_*$ of a morphism $\lambda$ between Stokes-filtered local systems is compatible with the Stokes filtration. The question is local, and we can assume that the morphism $\lambda$ is graded on $(V\cap U_I^\circ)\times\nb(A^\circ)$, according to Proposition \ref{prop:gradedmorphism}. Then $\wtj_*\lambda$ is also graded on this open set with respect to the Stokes filtration constructed above, and is thus also Stokes-filtered.

Once the functor $\wtj_*$ is defined, that it is essentially surjective is proven similarly, since in the neighbourhood of any point $(x_o,\theta_o)$ the sheaves $\cL_{\leq\varphi}$ are given by a formula like \eqref{eq:decomp}.

The full faithfulness follows from the full faithfulness for the underlying local systems.
\end{proof}

\subsection{Openness}\label{subsec:openness}
We keep the notation as above.

\begin{proposition}\label{prop:openness}
Let $x_o\in D_I^\circ$ and let $(\cL,\cL_\bbullet)_{x_o}$ be a Stokes-filtered local system on $\varpi^{-1}(x_o)\simeq(S^1)^\ell$ with associated $\ccI$-covering contained in $\wt\Sigma_{I,x_o}^\circ$. Then there exists an open neighbourhood $\nb(x_o)$ in $D_I^\circ$ such that $(\cL,\cL_\bbullet)_{x_o}$ extends in a unique way as a Stokes-filtered local system on $\varpi^{-1}(\nb(x_o))\simeq\nb(x_o)\times(S^1)^\ell$ with associated $\ccI$-covering contained in $\wt\Sigma_{I}^\circ{}_{|\nb(x_o)}$. Any morphism $(\cL,\cL_\bbullet)_{x_o}\to(\cL',\cL'_\bbullet)_{x_o}$ between such objects also extends locally in a unique way.
\end{proposition}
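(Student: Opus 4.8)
The plan is to reduce the statement to the "closedness" Proposition \ref{prop:closedness} by a compactness-and-gluing argument on the torus fibre $\varpi^{-1}(x_o)\simeq(S^1)^\ell$. As in the proofs in Sections \ref{subsec:grading}--\ref{subsec:closedness}, the question is local near $x_o$ in $D_I^\circ$, so after a local ramification we may assume $\wt\Sigma_I^\circ$ is the trivial covering $\Phi\times U_I^\circ\times(S^1)^\ell$ for a good finite set $\Phi$ of exponential factors, and we work in the non-ramified case; the general case follows from uniqueness of the construction together with the Galois action of the ramification. The underlying local system part is classical: a local system on $\varpi^{-1}(x_o)\simeq(S^1)^\ell$ extends uniquely to $\nb(x_o)\times(S^1)^\ell$ because the inclusion of the fibre is a homotopy equivalence after shrinking $\nb(x_o)$ to a contractible set, and likewise morphisms extend uniquely. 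So the content is to carry the Stokes filtration along with this extension.

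First I would cover the torus fibre $\{x_o\}\times(S^1)^\ell$ by finitely many small open sets $\nb(A_s^\circ)$ of the type appearing in Proposition \ref{prop:grading}, i.e. each $A_s$ an arc on a suitable circle $S^1_{\alphag,\theta_o}$ on which, for every pair $\varphi\neq\psi\in\Phi$, the number of Stokes directions in $A_s^\circ$ equals that in $A_s$ and is at most $1$; such arcs exist and cover $(S^1)^\ell$ since the Stokes hypersurfaces $\St(\varphi,\psi)$ are smooth of codimension one and in general position for a good $\Phi$. On each $\nb(A_s^\circ)$, after shrinking $\nb(x_o)$ to a small contractible neighbourhood $V$, Proposition \ref{prop:grading} gives a splitting of $(\cL,\cL_\bbullet)_{x_o}$, so over $V\times\nb(A_s^\circ)$ I define the extension by the graded formula $\bigoplus_{\psi\in\Phi}\beta_{\psi\leq\varphi}(\wtj_*\gr_\psi\cL)$, exactly as in \eqref{eq:decomp}, using the unique extension of each rank-graded piece $\gr_\psi\cL$ from the fibre. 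By the same local computation as in the proof of Proposition \ref{prop:closedness} (the semi-closed interval has vanishing compactly supported cohomology), these locally defined Stokes-filtered extensions are independent of the chosen splitting up to canonical isomorphism on the overlaps $V\times(\nb(A_s^\circ)\cap\nb(A_t^\circ))$, so they glue to a Stokes-filtered local system $(\cL,\cL_\bbullet)$ on $V\times(S^1)^\ell=\varpi^{-1}(\nb(x_o))$ restricting to the given one on the fibre, and Proposition \ref{prop:gradedmorphism} shows a morphism on the fibre likewise extends, being graded on each piece of the cover. Uniqueness of both the object and the morphism follows from the uniqueness of the underlying local-system extension together with the fact that the Stokes filtration is determined on each $V\times\nb(A_s^\circ)$ by the associated graded, which is itself determined on the fibre.

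The main obstacle I anticipate is the gluing step: checking that the locally defined Stokes-filtered extensions on the $V\times\nb(A_s^\circ)$ agree on overlaps canonically, and that the resulting filtration on the overlap does not depend on which arc it is computed from. This is where one must use that on an overlap $\nb(A_s^\circ)\cap\nb(A_t^\circ)$ no new Stokes direction is crossed, so both gradings define the same subsheaves $\cL_{\leq\varphi}$; formally this is the same argument as the compatibility checks inside the proof of Proposition \ref{prop:closedness}, transported from "extension across a boundary of $U_I^\circ$" to "extension across the base direction $x_o$", which is slightly easier since here the extension in the base direction is along a contractible $V$ rather than across a boundary. Everything else — the underlying local-system extension, the vanishing of the relevant local cohomology, the gradedness of morphisms — is available from the cited results and from Propositions \ref{prop:grading} and \ref{prop:gradedmorphism}.
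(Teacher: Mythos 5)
Your overall plan---cover the torus fibre $\varpi^{-1}(x_o)\simeq(S^1)^\ell$ by arcs on which Proposition \ref{prop:grading} gives a splitting, extend the graded pieces by the unique extension of local systems over a contractible $\nb(x_o)$, and glue---is exactly the structure of the paper's proof, and your remark in the final paragraph that the gluing works because ``no new Stokes direction is crossed'' is the correct guiding idea.

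However, the technical mechanism you invoke for the gluing step is misplaced. The semi-closed-interval cohomology computation from the proof of Proposition \ref{prop:closedness} is there for a purpose specific to the closedness situation: it shows that the extension-by-zero functor $\beta_{\psi\leq\varphi}$ commutes with the pushforward $\wtj_*$ across the boundary $\partial U_I^\circ$, which is needed precisely because one is extending to a set where $\wtj$ is not an open embedding over the new points. In the openness case there is no boundary pushforward at all---$\nb(x_o)$ is an open, contractible neighbourhood and the extension of local systems is simply the inverse of restriction---so that computation has no role to play, and citing it does not establish the independence of the chosen splittings or the gluing you need. The actual argument in the paper's proof is both different and more elementary: one fixes splittings $\cL_{x_o|V_\alpha}\simeq\bigoplus_\varphi\gr_\varphi\cL_{x_o|V_\alpha}$ once and for all, records the transition maps $\lambda_{\alpha\beta}$ and the property that $\lambda_{\alpha\beta}^{\psi,\varphi}=0$ unless $\psi\leq\varphi$ on $V_{\alpha\beta}$, then after shrinking $\nb(x_o)$ so that $\psi<\varphi$ on $V_{\alpha\beta}$ forces $\psi<\varphi$ on $\nb(x_o)\times U_{\alpha\beta}$, extends each $\gr_\varphi\cL_{x_o}$ and each component $\lambda_{\alpha\beta}^{\psi,\varphi}$ uniquely as (morphisms of) local systems. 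The cocycle condition and the upper-triangularity of the transition matrix then extend automatically, because a nonzero component of an extended morphism is nonzero everywhere on the connected set $\nb(x_o)\times U_{\alpha\beta}$, forcing $\psi<\varphi$ there. That gives the glued Stokes-filtered local system directly from the cocycle, with no cohomological vanishing needed. Finally, your uniqueness argument is a bit loose (``the Stokes filtration is determined on each $V\times\nb(A_s^\circ)$ by the associated graded'' is not literally true without a splitting); the paper instead obtains uniqueness by extending the identity automorphism of $(\cL,\cL_\bbullet)_{x_o}$ to an isomorphism between any two extensions, using the already-proved unique extension of morphisms.
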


\begin{proof}
The problem is local on $D_I^\circ$ and, by the uniqueness of the extension of morphisms, one can reduce the proof to the non-ramified case. We can therefore assume that $\Sigma_I^\circ=\Phi\times \nb(x_o)$. Moreover, the unique extension of local systems and morphisms between them is clear, so the question reduces to checking that Stokes filtrations extend as well, and that the extended morphism between the extended local systems is compatible with the extended Stokes filtrations.

By Proposition \ref{prop:grading}, we can cover $(S^1)^\ell=\varpi^{-1}(x_o)$ by simply connected open sets~$U_\alpha$ such that, for every~$\alpha$, there exists a neighbourhood $V_\alpha$ of the compact subset~$\ov U_\alpha$ and an isomorphism
\begin{equation}\label{eq:Stokesdec}
\cL_{x_o|V_\alpha}\simeq \bigoplus_{\varphi\in\Phi}\gr_\varphi\cL_{x_o|V_\alpha},
\end{equation}
and the Stokes filtration on $V_\alpha$ is given by
\begin{equation}\label{eq:Stokesfilgr}
\cL_{x_o,\leq\varphi|V_\alpha}\simeq\bigoplus_{\psi\in\Phi}\beta_{\psi\leq\varphi}\gr_\psi\cL_{x_o|V_\alpha}.
\end{equation}
The transition maps $\lambda_{\alpha\beta}$ for \eqref{eq:Stokesdec} on $V_{\alpha\beta}:=V_\alpha\cap V_\beta$ satisfy the cocycle condition and are compatible with the Stokes filtration, that is, $\lambda_{\alpha\beta}^{\psi,\varphi}:\gr_\psi\cL_{x_o|V_{\alpha\beta}}\to\gr_\varphi\cL_{x_o|V_{\alpha\beta}}$ is zero unless $\psi\leq\varphi$ on $V_{\alpha\beta}$.

Let us shrink $\nb(x_o)$ to a contractible open neighbourhood such that, for all \hbox{$\psi\neq\varphi\in\Phi$}, $\psi<\varphi$ on $V_{\alpha\beta}$ implies $\psi<\varphi$ on $\nb(x_o)\times U_{\alpha\beta}$. The local system $\gr_\varphi\cL_{x_o|U_\alpha}$ extends in a unique way to a local system $\gr_\varphi\cL_{|\nb(x_o)\times U_\alpha}$ on $\nb(x_o)\times U_\alpha$, and so do the morphisms $\lambda_{\alpha\beta}^{\psi\varphi}$, which satisfy thus the cocycle condition. In particular, if such an extension $\lambda_{\alpha\beta}^{\psi\varphi}$ is non-zero at one point of $\nb(x_o)\times U_{\alpha\beta}$, it is nonzero everywhere on this open set and we have $\psi<\varphi$ on this open set. Let us set $\cL_{|\nb(x_o)\times U_\alpha}:=\bigoplus_{\varphi\in\Phi}\gr_\varphi\cL_{|\nb(x_o)\times U_\alpha}$, that we equip with the Stokes filtration given by a formula similar to \eqref{eq:Stokesfilgr}. It follows that $\lambda_{\alpha\beta}$ is compatible with the Stokes filtrations. We regard now $\lambda_{\alpha\beta}$ as gluing data. The cocycle condition shows that they define a local system $\cL$ on $\varpi^{-1}(\nb(x_o))$ whose restriction to $\varpi^{-1}(x_o)$ is isomorphic to~$\cL$. It is thus uniquely isomorphic to the unique extension of $\cL_{x_o}$. Moreover, due to the compatibility with the Stokes filtrations, the latter also glue correspondingly as a Stokes filtration $\cL_\bbullet$ of this local system, and its restriction to $\varpi^{-1}(x_o)$ is equal to $\cL_{x_o \bbullet}$.

Let $\mu_{x_o}:(\cL,\cL_\bbullet)_{x_o}\to(\cL',\cL'_\bbullet)_{x_o}$ be a morphism. We can choose the covering $(U_\alpha)$ and the decomposition \eqref{eq:Stokesdec} so that each $\mu_{x_o,\alpha}$ is graded (\cf\cite[Prop.\,9.21]{Bibi10}). It extends uniquely as a morphism $\mu:\cL_{|\nb(x_o)\times U_\alpha}\to\cL'_{|\nb(x_o)\times U_\alpha}$, and it is graded with respect to the corresponding decompositions \eqref{eq:Stokesdec}. It follows that $\mu$ is strictly compatible with the Stokes filtrations $\cL_\bbullet$ and $\cL'_\bbullet$, where these Stokes-filtered local systems $(\cL,\cL_\bbullet)$ and $(\cL',\cL'_\bbullet)$ are obtained as in the first part.

We can now prove the uniqueness (\ie up to unique isomorphism) of $(\cL,\cL_\bbullet)$ constructed in the first part: the identity automorphism $(\cL,\cL_\bbullet)_{x_o}$ extends in a unique way as an isomorphism between two such extensions.
\end{proof}

\subsection{An equivalence of categories}
We will use the notation as in Section \ref{sec:equivcat}. Let $\pi:(E_I^\circ(x_o),y_o)\to (D_I^\circ(x_o),x_o)$ be a universal covering of $D_I^\circ(x_o)$ with base point $y_o$ above $x_o$, and let $\partial\wt Y_I^\circ(x_o)$ be the pullback of $\partial\wt X_I^\circ(x_o)$ by~$\pi$.

\begin{proposition}\label{prop:similar}
The restriction functor
\begin{itemize}
\item
from the category of Stokes-filtered local systems on $\partial\wt Y_I^\circ(x_o)$ with associated $\pi^{-1}\ccI$-covering contained in $\pi^{-1}\wt\Sigma_I^\circ(x_o)$
\item
to the category of Stokes-filtered local systems on $(\partial\wt \Omega)_0\simeq(S^1)^\ell$ with associated $\ccI_{x_o}$-covering contained in $\wt\Sigma_{x_o}$
\end{itemize}
is an equivalence.
\end{proposition}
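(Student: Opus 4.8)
The plan is to reduce the statement to a combination of the openness and closedness results already established in this appendix, together with the fact that the fibration $\partial\wt Y_I^\circ(x_o)\to E_I^\circ(x_o)$, while not assumed topologically trivial, becomes trivial after restriction to any contractible subset of the (contractible!) base $E_I^\circ(x_o)$. First I would observe that, since $E_I^\circ(x_o)$ is simply connected, the covering $\pi^{-1}\wt\Sigma_I^\circ(x_o)$ splits as a trivial covering $\Phi\times E_I^\circ(x_o)\times(S^1)^\ell$ once one passes, if necessary, to a ramified cover around the components of $D$; by the uniqueness built into the constructions below, it suffices to treat this non-ramified case and then descend via the Galois action of the ramification, exactly as in the proofs of Propositions~\ref{prop:closedness} and~\ref{prop:openness}.

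Next I would set up an exhaustion of the contractible manifold $E_I^\circ(x_o)$ by an increasing sequence of relatively compact contractible open subsets $W_1\subset\ov{W_1}\subset W_2\subset\cdots$ with $\bigcup_m W_m=E_I^\circ(x_o)$, chosen so that each pair $(W_m,\ov{W_m})$ satisfies the contractibility-of-neighbourhoods hypothesis~$(*)$ of Section~\ref{subsec:closedness}. On each $\varpi^{-1}(W_m)$ the relevant fibration is trivial, so Proposition~\ref{prop:openness} (applied with base point $y_o$ and propagated over the contractible set $W_m$) gives that the restriction functor to $(\partial\wt\Omega)_0\simeq(S^1)^\ell$ is an equivalence onto the category of Stokes-filtered local systems on $(S^1)^\ell$ with $\ccI_{x_o}$-covering in $\wt\Sigma_{x_o}$; here one uses that $\gr_\varphi\cL$ and the transition maps $\lambda_{\alpha\beta}^{\psi\varphi}$ extend uniquely over a contractible base and that the relation $\psi<\varphi$, once valid at $y_o$, remains valid over all of $W_m$ after shrinking the angular sets, so that the Stokes filtration glues. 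Proposition~\ref{prop:closedness} then upgrades the equivalence over $W_m$ to an equivalence over $\ov{W_m}$, and hence, by restriction, a compatible equivalence over $W_{m+1}$.

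I would then assemble these equivalences into the global statement by a Mittag-Leffler / colimit argument: a Stokes-filtered local system on $\partial\wt Y_I^\circ(x_o)$ is the same as a compatible system of its restrictions to the $\varpi^{-1}(W_m)$, and by the step above each such restriction is determined, uniquely up to unique isomorphism, by the single fibre datum over $(\partial\wt\Omega)_0$; the uniqueness of the extension (for objects and for morphisms) guarantees that these restrictions glue to a global object and that the glued object is functorial, yielding a quasi-inverse to the restriction functor. Full faithfulness on $\partial\wt Y_I^\circ(x_o)$ reduces, as in Proposition~\ref{prop:openness}, to the full faithfulness at the level of the underlying local systems, which holds because $E_I^\circ(x_o)$, being a universal cover, is simply connected, so a local system on $\partial\wt Y_I^\circ(x_o)$ is determined by its monodromy along the $(S^1)^\ell$ fibre.

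The main obstacle I anticipate is precisely the absence of topological triviality of the fibration $\partial\wt Y_I^\circ(x_o)\to E_I^\circ(x_o)$: in \cite[Th.\,4.13]{Mochizuki10b} one trivializes globally, whereas here one only has triviality over contractible subsets, so the real work is to check that the gluing data $\lambda_{\alpha\beta}$, the Stokes-filtration compatibility, and the vanishing estimates of the type appearing in the proof of Proposition~\ref{prop:closedness} all propagate correctly from the fibre to neighbourhoods and then across the exhaustion — in other words, that the constructions of Sections~\ref{subsec:grading}, \ref{subsec:closedness}, and~\ref{subsec:openness} are compatible along the chain $(S^1)^\ell\subset\varpi^{-1}(W_1)\subset\varpi^{-1}(\ov{W_1})\subset\cdots$. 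Once those compatibilities are in place the equivalence follows formally.
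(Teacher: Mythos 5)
Your overall strategy (combine Propositions~\ref{prop:openness} and~\ref{prop:closedness} with the simple connectedness of the base) is the right one, but two steps do not hold up as written. First, $E_I^\circ(x_o)$ is a universal covering, hence simply connected, but not contractible in general (if $D_I^\circ(x_o)$ is compact and simply connected, e.g.\ a projective space, then $E_I^\circ(x_o)=D_I^\circ(x_o)$), so the proposed exhaustion by relatively compact \emph{contractible} open subsets $W_m$ need not exist, and the trivialization of $\pi^{-1}\wt\Sigma_I^\circ(x_o)$ over each $W_m$ that you rely on is not available by this route. Second, and more seriously, the phrase ``Proposition~\ref{prop:openness} applied with base point $y_o$ and propagated over the contractible set $W_m$'' conceals the entire content of the statement: Proposition~\ref{prop:openness} only extends an object (or morphism) from a single fibre $\varpi^{-1}(x)$ to $\varpi^{-1}(\nb(x))$ for some small neighbourhood $\nb(x)$, and getting from there to all of $W_m$ is exactly the global extension problem you are trying to solve. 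You acknowledge at the end that ``the real work'' is this propagation, but you do not supply it. (A smaller slip: passing from $\ov{W_m}$ to $W_{m+1}$ ``by restriction'' goes in the wrong direction; one must extend again, which re-raises the same propagation issue.)

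The missing device is a connectedness argument that turns the local openness and closedness statements into extension along paths. The paper pulls back the data by an arbitrary continuous map $\Gamma:[0,1]^2\to E_I^\circ(x_o)$ with $\Gamma(0,0)=y_o$ and considers the set of $\epsilon\in[0,1]$ for which the equivalence holds over $[0,\epsilon]^2$; Proposition~\ref{prop:openness} makes this set open, Proposition~\ref{prop:closedness} makes it closed, and it contains $0$, hence equals $[0,1]$. Running this for paths gives unique extension of objects and morphisms along any path from $y_o$, and running it for squares (homotopies between paths, which exist because $E_I^\circ(x_o)$ is simply connected) gives independence of the path, whence a well-defined quasi-inverse. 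If you replace your exhaustion step by this open--closed argument on $[0,1]^2$, and drop the unjustified contractibility of $E_I^\circ(x_o)$, your proof becomes essentially the paper's.
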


\begin{proof}
Let $\Gamma:[0,1]^2\to E_I^\circ(x_o)$ be a continuous map sending $(0,0)$ to $y_o$. We pullback by $\Gamma$ the data from the first item of the proposition. Let us consider the subset of~$[0,1]$ consisting of $\epsilon$'s such that the equivalence of the proposition holds with respect to the restriction corresponding to the inclusion $(0,0)\in[0,\epsilon]^2$. Propositions \ref{prop:closedness} and \ref{prop:openness} imply that this set is open and closed, and contains $0$, hence it is equal to $[0,1]$. This shows that one can uniquely extend an object in the second category to an object in the first category along paths starting from $y_o$ and that this extension does not depend on the choice of the path. A similar assertion holds for morphisms.
\end{proof}

\begin{remarque}\label{rem:similar}
The uniqueness of the extension of morphisms enables one to obtain the equivalence between the corresponding $G$-equivariant categories, and this gives the implication $\eqref{enum:equivcat3}\implique\eqref{enum:equivcat4}$ in the proof of Theorem \ref{th:equivcat}.
\end{remarque}

\backmatter
\providecommand{\eprint}[1]{\href{http://arxiv.org/abs/#1}{\texttt{arXiv\string:\allowbreak#1}}}
\providecommand{\hal}[1]{\href{https://hal.archives-ouvertes.fr/hal-#1}{\texttt{hal-#1}}}
\providecommand{\doi}[1]{\href{http://dx.doi.org/#1}{\texttt{doi\string:\allowbreak#1}}}
\providecommand{\og}{``}
\providecommand{\fg}{''}
\providecommand{\smfphdthesisname}{Th\`ese}

\end{document}